\documentclass{amsart}

\usepackage{amssymb,amsmath,amsthm,xypic,amsfonts,latexsym,booktabs,tikz}

\usetikzlibrary{calc}

\theoremstyle{definition}
\newtheorem{definition}{Definition}[section]

\newtheorem{remark}[definition]{Remark}

\theoremstyle{plain}

\newtheorem{proposition}[definition]{Proposition}
\newtheorem{theorem}[definition]{Theorem}

\newtheorem{conjecture}[definition]{Conjecture}

\newcommand{\dend}{\mathsf{Dend}}
\newcommand{\dias}{\mathsf{Dias}}

\newcommand{\non}{\mathrm{Non}}
\newcommand{\den}{\mathrm{Den}}

\allowdisplaybreaks

\begin{document}

\title[Special identities for pre-Jordan algebras]
{Special identities for the pre-Jordan product in the free dendriform algebra}

\author[Bremner]{Murray R. Bremner}

\address{Department of Mathematics and Statistics,
University of Saskatchewan, Canada}

\email{bremner@math.usask.ca}

\author[Madariaga]{Sara Madariaga}

\address{Department of Mathematics and Statistics,
University of Saskatchewan, Canada}

\email{madariaga@math.usask.ca}

\dedicatory{In memory of Jean-Louis Loday (1946--2012)}

\subjclass[2010]{Primary 17C05. Secondary 17-04, 17A30, 17A50, 17C50, 18D50, 20C30}

\keywords{Polynomial identities, pre-Jordan algebras, dendriform algebras, computational linear algebra,
representation theory of the symmetric group, Koszul duality of quadratic operads}

\begin{abstract}
Pre-Jordan algebras were introduced recently in analogy with pre-Lie algebras.
A pre-Jordan algebra is a vector space $A$ with a bilinear multiplication $x \cdot y$ such that
the product $ x \circ y = x \cdot y + y \cdot x $ endows $A$ with the structure of a Jordan algebra,
and the left multiplications $L_\cdot(x)\colon y \mapsto x \cdot y$ define a representation
of this Jordan algebra on $A$.
Equivalently, $x \cdot y$ satisfies these multilinear identities:
  \begin{align*}
  &
  (x \circ y) \cdot (z \cdot u) + (y \circ z) \cdot (x \cdot u) + (z \circ x) \cdot (y \cdot u)
  \\
  &\quad
  \equiv
  z \cdot [(x \circ y) \cdot u] + x \cdot [(y \circ z) \cdot u] + y \cdot [(z \circ x) \cdot u],
  \\[2pt]
  &
  x \cdot [y \cdot (z \cdot u)] + z \cdot [y \cdot (x \cdot u)] + [(x \circ z) \circ y] \cdot u
  \\
  &\quad
  \equiv
  z \cdot [(x \circ y) \cdot u] + x \cdot [(y \circ z) \cdot u] + y \cdot [(z \circ x) \cdot u].
  \end{align*}
The pre-Jordan product $x \cdot y = x \succ  y + y  \prec x$ in any dendriform algebra also 
satisfies these identities.
We use computational linear algebra based on the representation theory of the symmetric group
to show that every identity of degree $\le 7$ for this product is implied
by the identities of degree 4, but that there exist new identities of degree 8 which do not follow
from those of lower degree.
There is an isomorphism of $S_8$-modules between these new identities and the special identities
for the Jordan diproduct in an associative dialgebra.
\end{abstract}

\maketitle


\section{Introduction}

During the last few decades many new algebraic structures have been discovered,
connecting several areas in mathematics and physics.
In particular, Loday \cite{Loday01} defined a dendriform algebra to be a vector space
$A$ with two bilinear operations $\succ$, $\prec$ satisfying these identities:
\begin{align*}
	( x \prec y ) \prec z &\equiv x \prec ( y \prec z ) + x \prec ( y \succ z ), \\
	( x \succ y ) \prec z &\equiv x \succ ( y \prec z ), \\
	x \succ ( y \succ z ) &\equiv ( x \prec y ) \succ z + ( x \succ y ) \succ z.
\end{align*}
The operation $x * y = x \prec y + x \succ y$ is associative,
and so this new structure dichotomizes the notion of associative algebra, splitting
the associative product into a sum of two operations.
Dendriform algebras are also closely related to associative dialgebras, through the
framework of Koszul duality for algebraic operads.
The notion of associative dialgebra defines a binary quadratic operad $\dias$, and
the dual operad $\dias^!$ as defined by Ginzburg and Kapranov \cite{GinzburgKapranov}
is precisely the operad $\dend$ of dendriform algebras.
Further splitting of the operations gives rise to the quadri-algebras introduced by
Aguiar and Loday \cite{AguiarLoday04}.
For a general theory of splitting operations, see Bai et al.~\cite{BaiBellierGuoNi12} and
Vallette \cite{Vallette08}.
For an introduction to operads from the point of view of algebraic structures, see the lecture notes of Vatne \cite{Vatne04}.
For a comprehensive monograph on algebraic operads, see Loday and Vallette \cite{LodayVallette12}.

\subsection*{Pre-Lie and pre-Jordan algebras}

The operation $x \cdot y = x \succ y - y \prec x$ in a dendriform algebra
satisfies the defining identities for pre-Lie algebras, which had
already been introduced independently in several areas of mathematics and then appeared in
other fields; see \cite{Burde06} and the references therein.
Pre-Lie algebras can be regarded as the algebraic structures behind the classical
Yang-Baxter equation, which plays an important role in integrable systems and quantum groups.
The L-dendriform algebras introduced by Bai, Liu and Ni \cite{BaiLiuNi10}
are related to pre-Lie algebras in the same way that pre-Lie algebras are related to Lie algebras.

We remark that the definition of dendriform algebra can be motivated in terms of pre-Lie algebras.  
Starting with the pre-Lie identity,
  \[
  ( x \cdot y ) \cdot z - x \cdot ( y \cdot z )
  \equiv
  ( y \cdot x ) \cdot z - y \cdot ( x \cdot z ),
  \]
expanding it using the preceding operation in a dendriform algebra,
and identifying terms with the same permutation of the variables gives these identities,
  \begin{align*}
    ( x \succ y ) \succ z
  - x \succ ( y \succ z )
  &\equiv
  - ( x \prec y ) \succ z,
  \\
  x \succ ( z \prec y )
  &\equiv
  ( x \succ z ) \prec y,
  \\
  - ( y \prec x ) \succ z
  &\equiv
    ( y \succ x ) \succ z
  - y \succ ( x \succ z ),
  \\
  ( y \succ z ) \prec x
  &\equiv
  y \succ ( z \prec x ),
  \\
  - z \prec ( x \succ y )
  &\equiv
    z \prec ( x \prec y )
  - ( z \prec x ) \prec y,
  \\
    z \prec ( y \prec x )
  - ( z \prec y ) \prec x
  &\equiv
  - z \prec ( y \succ x ),
  \end{align*}
which are equivalent to the identities defining dendriform algebras.

Pre-Jordan algebras as Jordan analogues of pre-Lie algebras were introduced
by Hou, Ni and Bai \cite{HouNiBai}.
Explicitly,
\begin{itemize}
	\item
	if $(A, \cdot)$ is a pre-Lie (respectively pre-Jordan) algebra then the product
	$[x,y] = x \cdot y - y \cdot x$ (resp. $x \circ y = x \cdot y + y \cdot x$) defines
	a Lie (resp. Jordan) algebra,
	\item
	a vector space with a bilinear product $(A,\cdot)$ is a pre-Lie (respectively pre-Jordan) algebra
	if and only if $(A,[-,-])$ (resp. $(A, \circ)$) defined above is a Lie (resp. Jordan) algebra and
	$(L_\cdot,A)$ is a representation of $(A,[-,-])$ (resp. $(A, \circ)$).
\end{itemize}
There are analogues of the classical Yang-Baxter equation in Jordan algebras
\cite{Zhelyabin00} and pre-Jordan algebras \cite{HouNiBai}.
The J-dendriform algebras introduced by Bai and Hou \cite{BaiHou12}
are related to pre-Jordan algebras in the same way that pre-Jordan algebras are related to Jordan algebras.

These structures are interconnected as shown in the commutative diagram of categories in Table \ref{BaiHouTable},
which has been described in detail by Bai and Hou \cite{BaiHou12}.
We are most interested in the vertical arrows, which use analogues of Lie and Jordan products
on an algebra in one variety to endow the underlying vector space with the structure
of an algebra in another variety.

\begin{table}
\small
\begin{tabular}{|c|}
\hline
\\[-6pt]
\begin{tikzpicture}
\node[draw=none,fill=none] at (0,1.25) (Lie) {Lie algebra};
\node[draw=none,fill=none] at (3,1.25) (preL) {\begin{tabular}{c}pre-Lie\\algebra\end{tabular}};
\node[draw=none,fill=none] at (6,1.25) (Ldend) {\begin{tabular}{c}L-dendriform\\algebra\end{tabular}};
\node[draw=none,fill=none] at (3,0) (ass) {\begin{tabular}{c}associative\\algebra\end{tabular}};
\node[draw=none,fill=none] at (6,0) (dend) {\begin{tabular}{c}dendriform\\algebra\end{tabular}};
\node[draw=none,fill=none] at (9,0) (quad) {quadrialgebra};
\node[draw=none,fill=none] at (0,-1.25) (Jord) {Jordan algebra};
\node[draw=none,fill=none] at (3,-1.25) (preJ) {\begin{tabular}{c}pre-Jordan\\algebra\end{tabular}};
\node[draw=none,fill=none] at (6,-1.25) (Jdend) {\begin{tabular}{c}J-dendriform\\algebra\end{tabular}};
\draw [->] (Ldend.west) -- (preL.east);
\draw [->] (preL.west) -- (Lie.east);
\draw [->] (quad.west) -- (dend.east);
\draw [->] (dend.west) -- (ass.east);
\draw [->] (Jdend.west) -- (preJ.east);
\draw [->] (preJ.west) -- (Jord.east);
\draw [->] (quad.west) -- (Ldend.east);
\draw [->] (dend.west) -- (preL.south east);
\draw [->] (ass.west) -- (Lie.south);
\draw [->] (quad.west) -- (Jdend.east);
\draw [->] (dend.west) -- (preJ.north east);
\draw [->] (ass.west) -- (Jord.north);
\end{tikzpicture}
\\
\hline
\end{tabular}
\medskip
\caption{Commutative diagram of categories of algebras}
\label{BaiHouTable}
\end{table}

Pre-Lie algebras can be regarded as the algebraic structures behind the classical Yang-Baxter equation (CYBE),
which plays an important role in integrable systems and quantum groups.
This can be seen more clearly in terms of $\mathcal{O}$-operators of a Lie algebra introduced 
by Kupershmidt \cite{Kupershmidt99}
who generalized the operator form of the CYBE in a Lie algebra.
Moreover, the CYBE and the Rota-Baxter operators on a Lie algebra are understood as the special
$\mathcal{O}$-operators corresponding to the co-adjoint representation and the adjoint representation.
In terms of $\mathcal{O}$-operators, there are analogues of the CYBE in Jordan algebras
\cite{Zhelyabin00} and pre-Jordan algebras \cite{HouNiBai}.
The arrows in Table \ref{BaiHouTable} can be reversed using Rota-Baxter operators
\cite{AguiarLoday04,BaiHou12,BaiLiuNi10}:
\begin{itemize}
	\item a Rota-Baxter operator on an associative algebra gives a dendriform algebra,
	\item a Rota-Baxter operator on a dendriform algebra, or two commuting Rota-Baxter operators
		 on an associative algebra, gives a quadri-algebra,
	\item a Rota-Baxter operator on a Lie algebra gives a pre-Lie algebra,
	\item a Rota-Baxter operator on a pre-Lie algebra, or two commuting Rota-Baxter operators
		on a Lie algebra, gives an L-dendriform algebra,
	\item a Rota-Baxter operator on a Jordan algebra gives a pre-Jordan algebra,
	\item a Rota-Baxter operator on a pre-Jordan algebra, or two commuting Rota-Baxter operators
		on a Jordan algebra, gives a J-dendriform algebra.
\end{itemize}

\subsection*{Polynomial identities for bilinear operations}

In the present paper we investigate polynomial identities satisfied by
the pre-Jordan product $ x \cdot y = x \succ y + y \prec x $ in the free dendriform algebra;
our approach relies heavily on computational linear algebra.
We first verify that there are no identities of degree 3;
we then consider degree 4 and show that the identities satisfied by this operation
are equivalent to the defining identities for pre-Jordan algebras.
We next show that every identity of degree $\le 7$ satisfied by this operation is
implied by the identities of degree 4.
We finally consider degree 8, and obtain special identities for the pre-Jordan product:
identities satisfied by this operation but not implied by the identities of degree 4.

Comparing these results with those of Bremner and Peresi \cite{BremnerPeresi11}
for Jordan dialgebras, we observe some remarkable facts.
The Jordan diproduct $x \dashv y + y \vdash x$ in an associative dialgebra satisfies
the right commutative identity in degree 3; but the pre-Jordan product in a
dendriform algebra satisfies no identities in degree 3.
For each $n \ge 4$, we obtain two modules over the symmetric group $S_n$:
  \begin{itemize}
  \item
The first $S_n$-module consists of the identities for the Jordan diproduct in degree $n$
which do not follow from identities of lower degree, where an identity is a
multilinear polynomial in the free right commutative algebra which expands to zero in the
free associative dialgebra when the right commutative operation is replaced by the Jordan diproduct.
  \item
The second $S_n$-module consists of the identities for the pre-Jordan product in degree $n$
which do not follow from identities of lower degree, where an identity is a
multilinear polynomial in the free nonassociative algebra
which expands to zero in the free dendriform algebra when the nonassociative operation
is replaced by the pre-Jordan product.
  \end{itemize}
We show that these $S_n$-modules are isomorphic for $4 \le n \le 8$.
This suggests a relation between the identities
satisfied by corresponding bilinear operations in the free algebras in two varieties
defined by dual operads.
We give a precise statement of this conjecture; see Section \ref{conjecturesection}.

\subsection*{Computational methods}

We conclude this introduction with a brief summary of the theoretical aspects of our computational approach.
For further information, see \cite{BremnerMurakamiShestakov,BremnerPeresi11,Hentzel77} and the references therein.

We say that a multilinear nonassociative polynomial $f(x_1, \dots, x_n)$ is an identity in an algebra $A$
if $f(a_1, \dots, a_n) = 0$ for any $a_1, \dots, a_n \in A$; we usually abbreviate this by
$f(x_1, \dots, x_n) \equiv 0$.
We regard the subspace of all identities of degree $n$ for an algebra $A$ as a module over the symmetric group
$S_n$ acting by permutations of the variables.
Given identities $f, f_1, \dots, f_k$ of degree $n$, we say that $f$ is a consequence of $f_1, \dots, f_k$
if $f$ belongs to the $S_n$-submodule generated by $f_1, \dots, f_k$;
in other words, $f$ is a linear combination of $f_1, \dots, f_k$, allowing permutations of the variables.

Let $\non(X)$ be the free nonassociative algebra on the generating set $X$ over the field $F$, and
let $A_n$ be the subspace of $\non(X)$ consisting of the multilinear polynomials of degree $n$.
Let $\den(X)$ be the free dendriform algebra on $X$ over $F$,
and let $B_n$ be the subspace of $\den(X)$ consisting of the multilinear polynomials of degree $n$.
For each $n$ we consider the linear map $\mathcal{E}_n \colon A_n \to B_n$, which we call the expansion map,
defined by replacing each occurrence of the nonassociative product $x \cdot y$ in $\non(X)$ by the
pre-Jordan product $x \succ y + y \prec x$ in $\den(X)$.
The basic computational principle is that the kernel of $\mathcal{E}_n$ consists of the identities of degree $n$
satisfied by the pre-Jordan product in the free dendriform algebra.
As $n$ grows, so does the size of the matrices, so different techniques are needed to compute this kernel,
as will be detailed below.

All the computations described in this paper were performed using Maple 16.
In general, we assume that the base field $F$ has characteristic 0, and use rational arithmetic for our
computations.
However, as the sizes of the matrices increase, it becomes impossible to compute row canonical forms
using rational arithmetic.
In higher degrees, we therefore use modular arithmetic with a suitable prime $p$.
To justify this, recall that the polynomial identities we consider are multilinear, 
so the spaces of identities in degree $n$
have a natural structure of a module over the symmetric group $S_n$.
The group algebra $F S_n$ is semisimple when the base field $F$ has characteristic 0 or $p > n$.
This implies that the ranks of the matrices will be the same whether we use rational arithmetic
or modular arithmetic with a prime greater than the degree of the identities.
(We use $p = 101$, the smallest prime greater than 100.)
For a more detailed discussion of these issues,
see \cite[\S 5]{BremnerPeresi11}.


\section{Normal words in the free dendriform algebra}

An $L$-algebra is a vector space over a field $F$ with two bilinear operations $\prec$, $\succ$
satisfying inner associativity (also called the entanglement identity):
  \begin{equation}
  \label{Lidentity}
  (x \succ y) \prec z \equiv x \succ (y \prec z).
  \end{equation}
Thus a dendriform algebra is an $L$-algebra satisfying two additional identities:
  \begin{align}
  \label{Didentity1}
  (x \prec y) \prec z &\equiv x \prec (y \prec z) + x \prec (y \succ z),
  \\
  \label{Didentity2}
  x \succ (y \succ z) &\equiv (x \succ y) \succ z + (x \prec y) \succ z.
  \end{align}
The free dendriform algebra $\den(X)$ on a set $X$ of generators is the quotient of the free $L$-algebra $L(X)$
by the $T$-ideal generated by these two identities.

We recall some definitions from Bokut et al.~\cite{BokutChenHuang}.
We consider a field $F$, a set of variables $X$, and a set of multilinear operations
  \[
  \Omega = \bigcup_{n \ge 1} \Omega_n,
  \qquad
  \Omega_n = \{ \delta^{(n)}_i \mid i \in I_n \},
  \]
where $\Omega_n$ is the set of $n$-ary operations.
We define the set of $\Omega$-words over $X$ as
  \begin{align*}
  &
  (X,\Omega) = \bigcup_{m=0}^\infty (X,\Omega)_m,
  \quad
  (X,\Omega)_0 = X,
  \quad
  (X,\Omega)_m = X \cup \Omega((X,\Omega)_{m-1}) \; (m \ge 1),
  \\
  &
  \Omega((X,\Omega)_{m-1})
  =
  \bigcup_{t=1}^\infty
  \{ \delta_i^{(t)}(u_1, \dots , u_t) \mid \delta_i^{(t)} \in \Omega_t, \; u_j \in (X,\Omega)_{m-1} \}.
  \end{align*}
Thus $(X,\Omega)_m$ consists of all monomials involving $m$ or fewer operations.

If $\Omega = \{ \succ, \prec \}$ then an $\Omega$-word will be called an $L$-word.
Chen and Wang \cite{ChenWang10} define an $L$-word $u$ to be a normal $L$-word if $u$ is one of the following:
  \begin{itemize}
	\item $ u=x $, where $ x \in X $.
	\item $ u = v \succ w $, where $v$ and $w$ are normal $L$-words.
	\item $ u = v \prec w $, with $ v \neq v_1 \succ v_2 $, where $ v, v_1,v_2,w $ are normal $L$-words.
  \end{itemize}
The set $N$ of normal $L$-words is a linear basis for the free $L$-algebra $L(X)$.
Furthermore, there exists a monomial order in $L(X)$ which allows us to identify the leading normal $L$-word
of any $L$-polynomial.
Chen and Wang \cite{ChenWang10} prove that
  \begin{align*}
  &
  S = \{ f_1(x,y,z), f_2(x,y,z), f_3(x,y,z,v) \mid x,y,z,v \in N \},
  \\[2pt]
  &
  f_1(x,y,z) = (x \prec y) \prec z - x \prec (y \prec z) - x \prec (y \succ z),
  \\
  &
  f_2(x,y,z) = (x \prec y) \succ z + (x \succ y) \succ z - x \succ (y \succ z),
  \\
  &
  f_3(x,y,z,v) = (( x \succ y ) \succ z ) \succ v - ( x \succ y ) \succ ( z \succ v) + ( x \succ ( y \prec z )) \succ v,
  \end{align*}
is a Gr\"obner-Shirshov basis in $L(X)$ for the $T$-ideal generated by the dendriform identities
\eqref{Didentity1}-\eqref{Didentity2}.

Following Chen and Wang \cite{ChenWang10}, we say that an $L$-word $u$ is a normal $D$-word
if $u$ is one of the following:
  \begin{itemize}
	\item $ u=x $, where $ x \in X $.
	\item $ u = x \prec v $, where $x \in X$ and $v$ is a normal $D$-word.
	\item $ u = x \succ v $, where $x \in X$ and $v$ is a normal $D$-word.
	\item $ u = ( x \succ u_1 ) \succ u_2 $, where $x \in X$ and $u_1,u_2$ are normal $D$-words.
  \end{itemize}
The set of normal $D$-words is a linear basis of the free dendriform algebra \cite[Corollary 3.5]{ChenWang10}.
This allows us to reduce substantially the size of the computations in the free dendriform algebra,
as we can see from Table \ref{combinatorialtable}.
The numbers of binary association types (one nonassociative binary operation),
double binary association types (two nonassociative binary operations),
and normal $D$-types, are given by
  \[
  \frac{1}{n}\binom{2n{-}2}{n{-}1},
  \qquad
  2^{n-1} \frac{1}{n}\binom{2n{-}2}{n{-}1},
  \qquad
  \frac{1}{n+1}\binom{2n}{n}.
  \]
The first formula is well-known, and the second is an immediate consequence;
for the third see Loday \cite[\S 5 and Appendix A]{Loday01}.

\begin{table}[h]
\begin{tabular}{r|rrr}
$n$ &\quad binary types &\quad double binary types &\quad normal $D$-types \\
\midrule
3 &\quad   2 &\quad     8 &\quad    5 \\
4 &\quad   5 &\quad    40 &\quad   14 \\
5 &\quad  14 &\quad   224 &\quad   42 \\
6 &\quad  42 &\quad  1344 &\quad  132 \\
7 &\quad 132 &\quad  8448 &\quad  429 \\
8 &\quad 429 &\quad 54912 &\quad 1430
\end{tabular}
\medskip
\caption{Binary association types and normal dendriform words}
\label{combinatorialtable}
\end{table}

Figure \ref{normalizealgorithm} gives an algorithm to express any $L$-word
as a linear combination of normal $D$-words using the reductions given by
the Gr\"{o}bner-Shirshov basis.

\begin{figure}
\begin{itemize}
  \item[] Procedure \textbf{Dnormalize}
  \medskip
  \item[] \textbf{Input:} a term $[c,m]$ consisting of a coefficient $c$ and an $L$-word $m$.
  \item[] \textbf{Output:} a list containing the terms of the normal form of $[c,m]$
  as a linear combination of normal $D$-words.	
  \medskip
	\item If the degree of $m$ is 1 or 2 then
       \begin{itemize}
       \item Set $\texttt{checkedresult} \leftarrow [c,m]$.
       \end{itemize}
	\item Else (\textit{we write $m = m_1 \circ m_2$ where $\circ \in \{ \succ, \prec \}$})
	   \begin{itemize}
	   \item Set $\texttt{result} \leftarrow [\,]$ (\textit{the empty list}).
			\item Set $\texttt{normalized1} \leftarrow \textbf{Dnormalize}([1,m_1])$ (\textit{recursive call}).
			\item Set $\texttt{normalized2} \leftarrow \textbf{Dnormalize}([1,m_2])$ (\textit{recursive call}).
			\item For $[c_1,m'_1]$ in \texttt{normalized1} and $[c_2,m'_2]$ in \texttt{normalized2} do:
				\begin{itemize}
					\item Set $\texttt{newc} \leftarrow c c_1 c_2$.
                    \item Set $\texttt{newm} \leftarrow m'_1 \circ m'_2$.
					\item If \texttt{newm} has the form $( x \succ y ) \prec z$ then:
                          \begin{itemize}
                          \item Append the term $[ \texttt{newc}, x \succ ( y \prec z ) ]$
                          to \texttt{result}
                          \item[] (\textit{reduce using inner associativity}).
                          \end{itemize}
                    \item Else if \texttt{newm} has the form $(x \prec y) \prec z$ then:
                          \begin{itemize}
                          \item Append
                          $[ \texttt{newc}, x \prec (y \prec z) ]$,
                          $[ \texttt{newc}, x \prec (y \succ z) ]$
                          to \texttt{result}
                          (\textit{reduce using $f_1$}).
						\end{itemize}
                    \item Else if \texttt{newm} has the form $(x \prec y) \succ z$ then:
                          \begin{itemize}
                          \item Append
                          $[ -\texttt{newc}, (x {\succ} y) \succ z ]$,
                          $[ \texttt{newc}, x \succ (y {\succ} z) ]$
                          to \texttt{result}
                          (\textit{reduce using $f_2$}).
						\end{itemize}
                    \item Else if \texttt{newm} has the form $(( x \succ y ) \succ z ) \succ v$ then:
                          \begin{itemize}
                          \item Append
                          $[ \texttt{newc}, ( x {\succ} y ) \succ ( z {\succ} v) ]$,
                          $[ -\texttt{newc}, ( x \succ ( y {\prec} z )) \succ v ]$
                          to \texttt{result}
                          (\textit{reduce using $f_3$}).
						\end{itemize}
					\item Else append $[ \texttt{newc}, \texttt{newm} ]$ to \texttt{result}.
				\end{itemize}
            \item If $\texttt{result} = [ [ c, m ] ]$ then (\textit{no reduction was possible})
                  \begin{itemize}
                  \item Set $\texttt{checkedresult} \leftarrow \texttt{result}$.
                  \end{itemize}
            \item Else
                  \begin{itemize}
			      \item Set $\texttt{checkedresult} \leftarrow [\,]$.
			      \item While $\texttt{result} \ne [\,]$ do:
				     \begin{itemize}
					 \item Set $\texttt{element} \leftarrow \texttt{result}[1]$
                           (\textit{the first term in} \texttt{result}).
					 \item Set $\texttt{normalizedelement} \leftarrow \textbf{Dnormalize}( \texttt{element} )$.
					 \item If $\texttt{element} = \texttt{normalizedelement}[1]$ then:
						\begin{enumerate}
							\item[] Append \texttt{element} to \texttt{checkedresult}.
							\item[] Delete \texttt{element} from \texttt{result}.
						\end{enumerate}
					 \item Else in \texttt{result} replace \texttt{element} by \texttt{normalizedelement}.
				     \end{itemize}
                  \end{itemize}
		\end{itemize}
	\item Return \texttt{checkedresult}.
\end{itemize}
\caption{Algorithm to compute the normal form of an $L$-word}
\label{normalizealgorithm}
\end{figure}


\section{Non-existence of identities in degree 3}

Since the defining identities for dendriform algebras have degree 3,
we need to check whether the pre-Jordan product in the free dendriform algebra satisfies
any multilinear identities in degree 3.  In case of a positive answer,
it would be natural to include these identities in the definition of pre-Jordan
algebra.

Since the pre-Jordan product satisfies no identities in degree 2, we need to consider
both possible association types in degree 3 for a binary nonassociative product:
$ ( - \cdot - ) \cdot - $ and $  -\cdot ( - \cdot - ) $.
Applying 6 permutations of 3 variables $x, y, z$ to the 2 association types
gives 12 multilinear nonassociative monomials:
\begin{align*}
	(x \cdot y) \cdot z, \;\; (x \cdot z) \cdot y, \;\; (y \cdot x) \cdot z, \;\;
	(y \cdot z) \cdot x, \;\; (z \cdot x) \cdot y, \;\; (z \cdot y) \cdot x, \\
  	x \cdot (y \cdot z), \;\; x \cdot (z \cdot y), \;\; y \cdot (x \cdot z), \;\;
	y \cdot (z \cdot x), \;\; z \cdot (x \cdot y), \;\; z \cdot (y \cdot x).
\end{align*}
These 12 monomials form a basis of the space $A_3$ of multilinear nonassociative polynomials of degree 3.

In degree 3 there are 8 association types for an algebra with 2 binary nonassociative products:
\begin{align*}
	( - \prec - ) \prec - , \;\;  ( - \prec - ) \succ - , \;\;  ( - \succ - ) \prec - , \;\;  ( - \succ - ) \succ -,
  \\
	 - \prec ( - \prec - ), \;\;   - \succ ( - \prec - ), \;\;   - \prec ( - \succ - ), \;\;   - \succ ( - \succ - ).
\end{align*}
The Gr\"{o}bner-Shirshov basis for the free dendriform algebra \cite{ChenWang10}
allows us to reduce this set to the 5 types, which we call normal $D$-types:
\[
	- \prec ( - \prec - ) , \;\;   - \succ ( - \prec - ), \;\; - \prec ( - \succ - ), \;\; - \succ ( - \succ - ), \;\;  ( - \succ - ) \succ -.
\]
In degree 3 the difference between using all types and using normal $D$-types is not substantial,
but as the degree becomes larger, the difference becomes bigger, and reductions must be done whenever
possible to save computer memory.
Applying 6 permutations of 3 variables to 5 normal $D$-types gives
30 multilinear normal $D$-words, which form a basis of the space $B_3$ of
multilinear dendriform monomials of degree 3.

\begin{proposition}
The pre-Jordan product in the free dendriform algebra satisfies no multilinear identities in degree 3.
\end{proposition}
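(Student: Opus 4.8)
The plan is to reduce the statement to an explicit linear-algebra computation over the field $F$. The space $A_3$ of multilinear nonassociative polynomials of degree 3 is spanned by the 12 monomials listed above, and the space $B_3$ of multilinear dendriform polynomials of degree 3 has as a basis the 30 normal $D$-words obtained by applying the 6 permutations of $\{x,y,z\}$ to the 5 normal $D$-types. An identity of degree 3 for the pre-Jordan product is, by the basic computational principle recalled in the introduction, precisely an element of the kernel of the expansion map $\mathcal{E}_3 \colon A_3 \to B_3$. So it suffices to show that $\mathcal{E}_3$ is injective, equivalently that the $30 \times 12$ matrix representing $\mathcal{E}_3$ with respect to these two bases has rank $12$.

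First I would compute the image under $\mathcal{E}_3$ of each of the 12 basis monomials of $A_3$. Concretely, each monomial $a \cdot (b \cdot c)$ or $(a \cdot b) \cdot c$ is expanded by replacing every product $u \cdot v$ with $u \succ v + v \prec u$; expanding a degree-3 monomial produces a sum of four $L$-words, each of which I then rewrite as a linear combination of normal $D$-words using the procedure \textbf{Dnormalize} of Figure \ref{normalizealgorithm}, i.e.\ by applying the Gr\"obner--Shirshov reductions (inner associativity and $f_1, f_2, f_3$). For example, $x \cdot (y \cdot z)$ expands to $x \succ (y \succ z) + x \succ (z \prec y) + (y \succ z) \prec x + (z \prec y) \prec x$, and after normalization each summand lands in the span of the 5 normal $D$-types. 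Recording the coefficient vectors of all 12 images as the columns of a matrix $M$ over $F$ gives the matrix of $\mathcal{E}_3$.

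Second, I would determine $\operatorname{rank} M$ by computing its row canonical form, as described in the computational methods section: over $\mathbb{Q}$ when feasible, or over $\mathbb{F}_{101}$, which yields the same rank since $101 > 3$ guarantees semisimplicity of the group algebra and hence rank invariance. If the rank equals $12$, then $\ker \mathcal{E}_3 = 0$ and there are no identities in degree 3, which is exactly the claim. To make the argument module-theoretic and minimize computation, one could instead organize it by the representation theory of $S_3$: decompose $A_3$ and $B_3$ into isotypic components for the irreducibles indexed by the partitions $[3], [2,1], [1^3]$, and check that $\mathcal{E}_3$ is injective on each component separately. This sidesteps a single large matrix in favor of smaller blocks.

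The main obstacle here is essentially bookkeeping rather than conceptual: the reductions to normal $D$-form must be carried out correctly and consistently, since an error in normalizing even one expanded monomial would corrupt the corresponding column of $M$ and could spuriously raise or lower the rank. Because the matrix has full column rank exactly when injectivity holds, the verification is unambiguous once the expansions are trusted; the delicate point is simply to ensure that the application of $f_1, f_2, f_3$ and inner associativity in \textbf{Dnormalize} terminates with a genuine linear combination of the 5 normal $D$-types and nothing else. Given the small sizes in Table \ref{combinatorialtable} for $n=3$, this is readily checked by the Maple implementation, and the expected outcome is $\operatorname{rank} M = 12$, establishing the proposition.
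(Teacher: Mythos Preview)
Your proposal is correct and follows essentially the same approach as the paper: set up the expansion map $\mathcal{E}_3\colon A_3 \to B_3$ as a $30 \times 12$ matrix using the normal $D$-word basis, expand and normalize each nonassociative monomial, and verify that the matrix has full rank $12$. The paper does precisely this, displaying the normalized expansions of $(x\cdot y)\cdot z$ and $x\cdot(y\cdot z)$ and the resulting matrix explicitly; your optional representation-theoretic refinement via $S_3$-isotypic components is not used by the paper in degree 3, though it is the method adopted in higher degrees.
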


\begin{proof}
The identities in degree 3 are the nonzero elements of the kernel of the expansion map
$\mathcal{E}_3\colon A_3 \to B_3$.
The $30 \times 12$ expansion matrix $E_3$ represents this linear map with respect to
the monomial bases of $A_3$ and $B_3$ described above.
The $(i,j)$ entry of $E_3$ contains the coefficient of the $i$-th normal $D$-word in the
normalized expansion of the $j$-th pre-Jordan monomial; to normalize the terms of the
expansion, we apply the algorithm in Figure \ref{normalizealgorithm}.

We consider the basic monomial in each association type, by which we mean
the monomial with the identity permutation of the variables.
For both association types, the expansion has 4 terms, but after normalization we
obtain 5 terms:
  \begin{align*}
  (x \cdot y) \cdot z
  &=
  ( x \succ y ) \succ z
  +
  z \prec ( x \succ y )
  +
  ( y \prec x ) \succ z
  +
  z \prec ( y \prec x )
  \\
  &=
  ( x \succ y ) \succ z +
  z \prec ( x \succ y ) -
  ( y \succ x ) \succ z +
  y \succ ( x \succ z ) +
  z \prec ( y \prec x ),
  \\[2pt]
  x \cdot ( y \cdot z )
  &=
  x \succ (y \succ z)
  +
  (y \succ z) \prec x
  +
  x \succ (z \prec y)
  +
  (z \prec y) \prec x
  \\
  &=
  x \succ (y \succ z)
  +
  (y \succ z) \prec x
  +
  x \succ (z \prec y)
  +
  z \prec (y \prec x)
  +
  z \prec (y \succ x) .
  \end{align*}
The expansions of the other basis monomials for $A_3$ are obtained by permutation of the variables.

The resulting expansion matrix is displayed in Table \ref{degree3expmat}.
This matrix has full rank, and hence its nullspace is $\{0\}$.
\end{proof}

\begin{table}
\small
\[
\left[ \begin {array}{rrrrrrrrrrrr}
  . &  . &  . &  . &  . &  1 &  . &  . &  . &  . &  . &  1 \\[-2pt]
  . &  . &  . &  1 &  . &  . &  . &  . &  . &  1 &  . &  . \\[-2pt]
  . &  . &  . &  . &  1 &  . &  . &  . &  . &  . &  1 &  . \\[-2pt]
  . &  1 &  . &  . &  . &  . &  . &  1 &  . &  . &  . &  . \\[-2pt]
  . &  . &  1 &  . &  . &  . &  . &  . &  1 &  . &  . &  . \\[-2pt]
  1 &  . &  . &  . &  . &  . &  1 &  . &  . &  . &  . &  . \\[-2pt]
  . &  . &  . &  . &  . &  . &  . &  1 &  . &  . &  1 &  . \\[-2pt]
  . &  . &  . &  . &  . &  . &  1 &  . &  1 &  . &  . &  . \\[-2pt]
  . &  . &  . &  . &  . &  . &  . &  . &  . &  1 &  . &  1 \\[-2pt]
  . &  . &  . &  . &  . &  . &  1 &  . &  1 &  . &  . &  . \\[-2pt]
  . &  . &  . &  . &  . &  . &  . &  . &  . &  1 &  . &  1 \\[-2pt]
  . &  . &  . &  . &  . &  . &  . &  1 &  . &  . &  1 &  . \\[-2pt]
  . &  . &  . &  1 &  . &  . &  . &  . &  . &  . &  . &  1 \\[-2pt]
  . &  . &  . &  . &  . &  1 &  . &  . &  . &  1 &  . &  . \\[-2pt]
  . &  1 &  . &  . &  . &  . &  . &  . &  . &  . &  1 &  . \\[-2pt]
  . &  . &  . &  . &  1 &  . &  . &  1 &  . &  . &  . &  . \\[-2pt]
  1 &  . &  . &  . &  . &  . &  . &  . &  1 &  . &  . &  . \\[-2pt]
  . &  . &  1 &  . &  . &  . &  1 &  . &  . &  . &  . &  . \\[-2pt]
  . &  . &  1 &  . &  . &  . &  1 &  . &  . &  . &  . &  . \\[-2pt]
  . &  . &  . &  . &  1 &  . &  . &  1 &  . &  . &  . &  . \\[-2pt]
  1 &  . &  . &  . &  . &  . &  . &  . &  1 &  . &  . &  . \\[-2pt]
  . &  . &  . &  . &  . &  1 &  . &  . &  . &  1 &  . &  . \\[-2pt]
  . &  1 &  . &  . &  . &  . &  . &  . &  . &  . &  1 &  . \\[-2pt]
  . &  . &  . &  1 &  . &  . &  . &  . &  . &  . &  . &  1 \\[-2pt]
  1 &  . & -1 &  . &  . &  . &  . &  . &  . &  . &  . &  . \\[-2pt]
  . &  1 &  . &  . & -1 &  . &  . &  . &  . &  . &  . &  . \\[-2pt]
 -1 &  . &  1 &  . &  . &  . &  . &  . &  . &  . &  . &  . \\[-2pt]
  . &  . &  . &  1 &  . & -1 &  . &  . &  . &  . &  . &  . \\[-2pt]
  . & -1 &  . &  . &  1 &  . &  . &  . &  . &  . &  . &  . \\[-2pt]
  . &  . &  . & -1 &  . &  1 &  . &  . &  . &  . &  . &  .
\end {array} \right]
\]
\caption{Expansion matrix in degree 3}
\label{degree3expmat}
\end{table}

\begin{remark} \label{remarkdegree3}
Throughout this paper we will be comparing the identities satisfied by the pre-Jordan product
in the free dendriform algebra with the identities satisfied by the Jordan diproduct in the
free associative dialgebra \cite{BremnerPeresi11}.
Every multilinear identity of degree 3
satisfied by the Jordan diproduct in the free associative dialgebra is a consequence of the
right commutative identity $x(yz) \equiv x(zy)$ \cite[Theorem 6]{Bremner10}.
This explains why in \cite{BremnerPeresi11} the domain of the expansion map in higher degrees
was taken to be the multilinear subspace of the free right commutative algebra.
However, in this paper, since we have no identities in degree 3, the domain of the expansion
map will be the multilinear subspace of the free nonassociative algebra.
\end{remark}


\section{Existence of defining identities in degree 4}

To find the multilinear polynomial identities in degree 4 satisfied by the pre-Jordan product in the
free dendriform algebra, we proceed as in degree 3, but the expansion matrix $E_4$ is larger.
We show that the defining identities for pre-Jordan algebras generate the kernel of the expansion map
$\mathcal{E}_4$ as an $S_4$-module.

In degree 4 there are 5 association types for a nonassociative binary product:
  \[
  ( ( - \cdot - ) \cdot - ) \cdot -,
  \quad
  ( - \cdot ( - \cdot - ) ) \cdot -,
  \quad
  ( - \cdot - ) \cdot ( - \cdot - ),
  \quad
  - \cdot ( ( - \cdot - ) \cdot - ),
  \quad
  - \cdot ( - \cdot ( - \cdot - ) ).
  \]
Applying 24 permutations of 4 variables $x, y, z, v$ we get 120 multilinear monomials
which form a basis of $A_4$.
We order these monomials first by association type and then by lexicographical order of the
permutation.

In an algebra with 2 nonassociative binary operations, there are 40 association types,
which reduce to 14 normal $D$-types:
  \begin{equation}
  \label{normalDtypes}
  \left\{
  \begin{array}{lll}
  - \prec (- \prec (- \prec -)), &\;\;
  - \succ (- \prec (- \prec -)), &\;\;
  - \prec (- \succ (- \prec -)), \\[2pt]
  - \succ (- \succ (- \prec -)), &\;\;
  - \prec (- \prec (- \succ -)), &\;\;
  - \succ (- \prec (- \succ -)), \\[2pt]
  - \prec (- \succ (- \succ -)), &\;\;
  - \succ (- \succ (- \succ -)), &\;\;
  - \prec ((- \succ -) \succ -), \\[2pt]
  - \succ ((- \succ -) \succ -), &\;\;
  (- \succ -) \succ (- \prec -), &\;\;
  (- \succ -) \succ (- \succ -), \\[2pt]
  (- \succ (- \prec -)) \succ -, &\;\;
  (- \succ (- \succ -)) \succ -.
  \end{array}
  \right.
  \end{equation}
Applying permutations of the variables we get 336 multilinear normal $D$-words
which form a basis for the space $B_4$.
We order these normal $D$-words first by $D$-type and then by lex order of the
permutation.

\begin{theorem} \label{pj4}
The kernel of the expansion map $\mathcal{E}_4\colon A_4 \to B_4$ is generated as
an $S_4$-module by the defining identities for pre-Jordan algebras:
  \begin{align*}
  PJ_1(x,y,z,u) &= (x \circ y) \cdot (z \cdot u) + (y \circ z) \cdot (x \cdot u) + (z \circ x) \cdot (y \cdot u)
  \\
  &\quad
  - z \cdot [(x \circ y) \cdot u] - x \cdot [(y \circ z) \cdot u] - y \cdot [(z \circ x) \cdot u]
  \equiv 0,
  \\[2pt]
  PJ_2(x,y,z,u) &= x \cdot [y \cdot (z \cdot u)] + z \cdot [y \cdot (x \cdot u)] + [(x \circ z) \circ y] \cdot u
  \\
  &\quad
  - z \cdot [(x \circ y) \cdot u] - x \cdot [(y \circ z) \cdot u] - y \cdot [(z \circ x) \cdot u]
  \equiv 0,
  \end{align*}
where $ x \circ y = x \cdot y + y \cdot x $.
\end{theorem}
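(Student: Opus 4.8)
The plan is to reduce the theorem to a comparison of dimensions between two explicitly computable subspaces of $A_4$, exactly as in the degree-$3$ proof but with larger matrices. First I would form the expansion matrix $E_4$ of the map $\mathcal{E}_4$ relative to the ordered bases already fixed for $A_4$ and $B_4$: the $120$ multilinear nonassociative monomials (ordered by association type, then lexicographically by permutation) and the $336$ multilinear normal $D$-words (ordered by normal $D$-type, then by lex order of the permutation). The $(i,j)$ entry of $E_4$ is the coefficient of the $i$-th normal $D$-word in the normalized expansion of the $j$-th nonassociative monomial, where each product $x \cdot y$ is replaced by $x \succ y + y \prec x$ and the result is reduced to a combination of normal $D$-words by the \textbf{Dnormalize} procedure of Figure \ref{normalizealgorithm}. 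Computing the rank $r$ of this $336 \times 120$ matrix then yields $\dim \ker \mathcal{E}_4 = 120 - r$, the dimension of the whole space of multilinear degree-$4$ identities for the pre-Jordan product.

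Next I would compute the $S_4$-submodule $M \subseteq A_4$ generated by $PJ_1$ and $PJ_2$. To do this I would apply each of the $24$ permutations of the variables $x,y,z,u$ to $PJ_1$ and to $PJ_2$, expand the resulting nonassociative polynomials in the basis of $A_4$, and store the $48$ coordinate vectors as the rows of a matrix $G$ of size $48 \times 120$. The dimension $s$ of $M$ is then the rank of $G$, read off from its row canonical form.

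With these two computations in hand, the argument closes in two moves. The containment $M \subseteq \ker \mathcal{E}_4$ follows once I verify that $\mathcal{E}_4(PJ_1) = 0$ and $\mathcal{E}_4(PJ_2) = 0$, equivalently that the rows of $G$ lie in the kernel of $E_4$, i.e.\ $E_4 \, G^{\mathsf{T}} = 0$; since $\ker \mathcal{E}_4$ is an $S_4$-submodule, vanishing on the two generators forces vanishing on all of $M$. This vanishing is precisely the assertion, recorded in the abstract, that the pre-Jordan product in a dendriform algebra satisfies the defining pre-Jordan identities. The reverse inclusion is the substantive part: I would check the numerical equality $s = 120 - r$. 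Because $M$ sits inside $\ker \mathcal{E}_4$ and the two spaces have equal dimension, they must coincide, which is exactly the claim that $PJ_1$ and $PJ_2$ generate the kernel.

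I expect the essential difficulty to lie in the reverse inclusion, that is, in the bare coincidence $\dim M = \dim \ker \mathcal{E}_4$: this equality is what certifies that the pre-Jordan product satisfies no degree-$4$ identities beyond the consequences of $PJ_1$ and $PJ_2$, and everything else is bookkeeping. To guard the rank computations against implementation errors I would also decompose both $M$ and $\ker \mathcal{E}_4$ into isotypic components over the partitions $[4]$, $[3,1]$, $[2,2]$, $[2,1,1]$, $[1^4]$ of $S_4$ and match the multiplicity of each irreducible constituent; agreement component by component is a stronger check than agreement of the single total dimension, and it is the form of the argument that will be needed when the same strategy is pushed into the higher degrees treated later in the paper.
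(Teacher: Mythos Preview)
Your proposal is correct and follows essentially the same computational strategy as the paper: build the $336\times 120$ expansion matrix, determine its nullity, generate the $S_4$-submodule spanned by all permutations of $PJ_1$ and $PJ_2$, and verify that this submodule coincides with the kernel. The only cosmetic difference is that the paper first extracts an explicit integer basis of the kernel via the Hermite normal form (and shortens it with LLL) and then checks two-way containment by rank comparisons, whereas you argue by one containment plus equality of dimensions; your isotypic-component cross-check is also carried out in the paper, immediately after the proof.
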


\begin{proof}
The $336 \times 120$ matrix $E_4$ represents the expansion map $\mathcal{E}_4\colon A_4 \to B_4$
with respect to the given ordered bases of $A_4$ and $B_4$.
The $(i,j)$ entry of $E_4$ is the coefficient of the $i$-th normal $D$-word in the normalized expansion
of the $j$-th nonassociative monomial.
We display the expansions of the 5 association types with the identity permutation of
the variables.
Each expansion produces 8 terms, but normalization increases the number to 15, 13, 12, 14, 16 terms
respectively:
\begin{align*}
((x \cdot y) \cdot z) \cdot u
&=
((x \succ y) \succ z) \succ u
+
u \prec ((x \succ y) \succ z)
+
(z \prec (x \succ y)) \succ u \\[-2pt]
&\quad
+
u \prec (z \prec (x \succ y))
+
((y \prec x) \succ z) \succ u
+
u \prec ((y \prec x) \succ z) \\[-2pt]
&\quad +
(z \prec (y \prec x)) \succ u
+
u \prec (z \prec (y \prec x)) \\
& =
(x \succ y) \succ (z \succ u)
-
(x \succ (y \prec z)) \succ u
+
u \prec ((x \succ y) \succ z) \\[-2pt]
&\quad -
(z \succ (x \succ y)) \succ u
+
z \succ ((x \succ y) \succ u)
+
u \prec (z \prec (x \succ y)) \\[-2pt]
&\quad -
(y \succ x) \succ (z \succ u)
+
(y \succ (x \prec z)) \succ u
+
(y \succ (x \succ z)) \succ u \\[-2pt]
&\quad -
u \prec ((y \succ x) \succ z)
+
u \prec (y \succ (x \succ z))
-
(z \succ (y \prec x)) \succ u  \\[-2pt]
&\quad -
z \succ ((y \succ x) \succ u)
+
z \succ (y \succ (x \succ u))
+
u \prec (z \prec (y \prec x)), \\[2pt]
(x \cdot (y \cdot z)) \cdot u
& =
(x \succ (y \succ z)) \succ u
+
u \prec (x \succ (y \succ z))
+
((y \succ z) \prec x) \succ u \\[-2pt]
&\quad +
u \prec ((y \succ z) \prec x)
+
(x \succ (z \prec y)) \succ u
+
u \prec (x \succ (z \prec y)) \\[-2pt]
&\quad +
((z \prec y) \prec x) \succ u
+
u \prec ((z \prec y) \prec x) \\
& =
(x \succ (y \succ z)) \succ u
+
u \prec (x \succ (y \succ z))
+
(y \succ (z \prec x)) \succ u \\[-2pt]
&\quad +
u \prec (y \succ (z \prec x))
+
(x \succ (z \prec y)) \succ u
+
u \prec (x \succ (z \prec y)) \\[-2pt]
&\quad -
(z \succ (y \prec x)) \succ u
-
z \succ ((y \succ x) \succ u)
+
z \succ (y \succ (x \succ u)) \\[-2pt]
&\quad -
(z \succ (y \succ x)) \succ u
+
z \succ ((y \succ x) \succ u)
+
u \prec (z \prec (y \prec x)) \\[-2pt]
&\quad +
u \prec (z \prec (y \succ x)),
\\[2pt]
(x \cdot y) \cdot (z \cdot u)
& =
(x \succ y) \succ (z \succ u)
+
(z \succ u) \prec (x \succ y)
+
(x \succ y) \succ (u \prec z) \\[-2pt]
&\quad +
(u \prec z) \prec (x \succ y)
+
(y \prec x) \succ (z \succ u)
+
(z \succ u) \prec (y \prec x) \\[-2pt]
&\quad +
(y \prec x) \succ (u \prec z)
+
(u \prec z) \prec (y \prec x) \\
& =
(x \succ y) \succ (z \succ u)
+
z \succ (u \prec (x \succ y))
+
(x \succ y) \succ (u \prec z) \\[-2pt]
&\quad +
u \prec (z \prec (x \succ y))
+
u \prec (z \succ (x \succ y))
-
(y \succ x) \succ (z \succ u) \\[-2pt]
&\quad +
y \succ (x \succ (z \succ u))
+
z \succ (u \prec (y \prec x))
-
(y \succ x) \succ (u \prec z) \\[-2pt]
&\quad +
y \succ (x \succ (u \prec z))
+
u \prec (z \prec (y \prec x))
+
u \prec (z \succ (y \prec x)),
\\[2pt]
x \cdot ((y \cdot z) \cdot u)
& =
x \succ ((y \succ z) \succ u)
+
((y \succ z) \succ u) \prec x
+
x \succ (u \prec (y \succ z)) \\[-2pt]
&\quad +
(u \prec (y \succ z)) \prec x
+
x \succ ((z \prec y) \succ u)
+
((z \prec y) \succ u) \prec x \\[-2pt]
&\quad +
x \succ (u \prec (z \prec y))
+
(u \prec (z \prec y)) \prec x \\
& =
x \succ ((y \succ z) \succ u)
+
(y \succ z) \succ (u \prec x)
+
x \succ (u \prec (y \succ z)) \\[-2pt]
&\quad +
u \prec (y \succ (z \prec x))
+
u \prec ((y \succ z) \succ x)
-
x \succ ((z \succ y) \succ u) \\[-2pt]
&\quad +
x \succ (z \succ (y \succ u))
-
(z \succ y) \succ (u \prec x)
+
z \succ (y \succ (u \prec x)) \\[-2pt]
&\quad +
x \succ (u \prec (z \prec y))
+
u \prec (z \prec (y \prec x))
+
u \prec (z \prec (y \succ x)) \\[-2pt]
&\quad -
u \prec ((z \succ y) \succ x)
+
u \prec (z \succ (y \succ x)),
\\[2pt]
x \cdot (y \cdot (z \cdot u))
& =
x \succ (y \succ (z \succ u))
+
(y \succ (z \succ u)) \prec x
+
x \succ ((z \succ u) \prec y) \\[-2pt]
&\quad +
((z \succ u) \prec y) \prec x
+
x \succ (y \succ (u \prec z))
+
(y \succ (u \prec z)) \prec x \\[-2pt]
&\quad +
x \succ ((u \prec z) \prec y)
+
((u \prec z) \prec y) \prec x \\
& =
x \succ (y \succ (z \succ u))
+
y \succ (z \succ (u \prec x))
+
x \succ (z \succ (u \prec y)) \\[-2pt]
&\quad +
z \succ (u \prec (y \prec x))
+
z \succ (u \prec (y \succ x))
+
x \succ (y \succ (u \prec z)) \\[-2pt]
&\quad +
y \succ (u \prec (z \prec x))
+
y \succ (u \prec (z \succ x))
+
x \succ (u \prec (z \prec y)) \\[-2pt]
&\quad +
x \succ (u \prec (z \succ y))
+
u \prec (z \prec (y \prec x))
+
u \prec (z \prec (y \succ x)) \\[-2pt]
&\quad -
u \prec ((z \succ y) \succ x)
+
u \prec (z \succ (y \succ x))
+
u \prec (z \succ (y \prec x)) \\[-2pt]
&\quad +
u \prec ((z \succ y) \succ x).
\end{align*}
We compute the rank of the expansion matrix and obtain 104, so the nullity is 16,
showing that there are identities in degree 4.

The most natural way to obtain an integer basis for the nullspace of an integer matrix
is to use the Hermite normal form.
In the present case, this produces an invertible $120 \times 120$ integer matrix $U$
for which $U E_4^t = H$ where $H$ is the Hermite normal form of $E_4^t$.
Since $E_4$ has rank 104, the last 16 rows of $H$ are zero, and so the last 16 rows of $U$
form a basis for the left nullspace of $E_4^t$ which is the right nullspace of $E_4$.
The squared Euclidean lengths of these nullspace basis vectors are 12 (4 times), 24 (6 times),
36 (3 times), 48 (3 times).

To get shorter vectors, we apply the LLL algorithm for lattice basis reduction \cite{BremnerBook}
to these nullspace basis vectors.
This gives 16 vectors all with squared Euclidean length 12,
which we denote by $V_1, \dots, V_{16}$.
For further details on this method as applied to polynomial identities, see
\cite{BremnerPeresi09b}.

We compare the resulting identities (whose coefficient vectors are the nullspace basis vectors)
with the defining identities for pre-Jordan algebras to verify that they are equivalent;
that is, that they generate the same $S_4$-module.
We construct a matrix with 144 rows and 120 columns,
consisting of a $120 \times 120$ upper block and a $24 \times 120$ lower block.
For each of the 2 defining identities for pre-Jordan algebras,
we fill the lower block with the coefficient vectors obtained by applying all 24 permutations of the variables
and then compute the row canonical form (RCF).
After processing both identities, the rank is 16.
Retaining the results of this computation, we perform the same calculation
for each of the 16 nullspace basis identities described above.
The rank does not increase, and hence the nullspace basis identities are consequences of the
defining identities for pre-Jordan algebras.

We then reverse this calculation, first processing the nullspace basis identities, obtaining rank 16,
and then processing the defining identities, which do not increase the rank.
Hence the defining identities are consequences of the nullspace basis identities.
This completes the proof.
\end{proof}

\subsection*{$S_4$-module structure of identities}

As mentioned in Remark \ref{remarkdegree3}, we will compare the identities for the pre-Jordan product
in the free dendriform algebra with those for the Jordan diproduct in the free associative dialgebra
\cite{BremnerPeresi11}.
In degree 4, both spaces are nonzero, so we can explicitly compute the structure of the identities as
modules over the symmetric group $S_4$.
Table \ref{S4table} gives the character table of $S_4$.

\begin{table}[h]
\[
\begin{array}{l|ccccc}
	\text{partition} & (1)(2)(3)(4) & (12)(3)(4) & (12)(34) & (123)(4) & (1234) \\
	\hline
	4 & 1 & 1 & 1 & 1 & 1 \\
	31 & 3 & 1 & -1 & 0 & -1 \\
	22 & 2 & 0 & 2 & -1 & 0\\
	211 & 3 & -1 & -1 & 0 & 1 \\
	1111 & 1 & -1 & 1 & 1 & -1 \\[2pt]
\end{array}
\]
\medskip
\caption{Character table of the symmetric group $S_4$}
\label{S4table}
\end{table}

We write $N_4$ for the nullspace of the expansion matrix $E_4$.
From the last proof we know that the reduced vectors $V_1, \dots, V_{16}$ form a basis for $N_4$.
Each of these vectors has squared length 12, and has 12 nonzero components from $\{ \pm 1 \}$.

To compute the character of $N_4$ as an $S_4$-module, we choose a set of conjugacy class representatives
(the column labels in Table \ref{S4table}), and compute the matrix representing the action of
each representative on $N_4$ with respect to the basis $V_1, \dots, V_{16}$.
The traces of these matrices give the character $( 16,4,0,1,0 )$ of the $S_4$-module $N_4$.
Expressing this character as a linear combination of the rows of the character table
gives the decomposition of $N_4$ as a sum of irreducible $S_4$-submodules:
  \begin{equation}
  \label{N4decomp}
  N_4 \cong 2[4] \oplus 3[31] \oplus [22] \oplus [211],
  \end{equation}
where $m [\lambda]$ denotes $m$ copies of the irreducible module for partition $\lambda$.
We note that there are no copies of the signature module $[1111]$.

For the Jordan diproduct in the free associative dialgebra, the domain $A_4$ of the expansion map
$\mathcal{E}_4$ is the multilinear subspace in degree 4 of the free right commutative algebra,
and $B_4$ is the multilinear subspace in degree 4 of the free associative dialgebra.
A basis for the kernel of this expansion map is given by
the 16 rows of the lower block in the matrix of \cite[Table 4]{Bremner10}.
Following the methods of the previous paragraph, we determine that the nullspace for the Jordan
diproduct has the same character as the nullspace for the pre-Jordan product.

\begin{proposition}
In degree 4, the $S_4$-module of multilinear nonassociative polynomial identities satisfied by
the pre-Jordan product in the free dendriform algebra is isomorphic to the $S_4$-module of
multilinear right-commutative polynomial identities satisfied by the Jordan diproduct in the
free associative dialgebra.
\end{proposition}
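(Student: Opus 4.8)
The plan is to reduce the isomorphism statement to an equality of characters. Since we work over a field $F$ of characteristic $0$, the group algebra $FS_4$ is semisimple, so two finite-dimensional $S_4$-modules are isomorphic if and only if they afford the same character. Thus it suffices to compute the character of each nullspace and check that the two agree; the isomorphism is then automatic and abstract, since the two modules sit inside different ambient spaces (multilinear free nonassociative polynomials on one side, multilinear free right-commutative polynomials on the other) with no canonical map between them.

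First I would record the character of the pre-Jordan nullspace $N_4$, which is already available from the discussion preceding \eqref{N4decomp}: acting by the five conjugacy class representatives of Table \ref{S4table} on the LLL-reduced basis $V_1, \dots, V_{16}$ and taking traces gives the character $(16,4,0,1,0)$, hence $N_4 \cong 2[4] \oplus 3[31] \oplus [22] \oplus [211]$, with no copy of the signature module $[1111]$.

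Next I would carry out the identical computation on the Jordan-diproduct side. Starting from the explicit basis for the kernel of the corresponding expansion map given by the $16$ rows of the lower block in \cite[Table 4]{Bremner10}, I would form, for each of the five class representatives, the $16 \times 16$ matrix describing its action on this basis, re-expressing each permuted basis vector in terms of the chosen basis, and read off the trace. The assertion is that these traces are again $(16,4,0,1,0)$, so that the Jordan-diproduct nullspace decomposes exactly as in \eqref{N4decomp}. Matching characters then yields the desired isomorphism of $S_4$-modules.

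The main obstacle is purely computational rather than conceptual: the basis from \cite[Table 4]{Bremner10} is not obviously permutation-stable, so for each class representative the permuted vectors must be rewritten in the original basis before the diagonal can be read off, exactly as was done for $N_4$. Once both characters are in hand and seen to coincide, semisimplicity of $FS_4$ closes the argument with no further work.
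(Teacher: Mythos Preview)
Your proposal is correct and follows essentially the same approach as the paper: compute the character of each nullspace with respect to an explicit $16$-element basis (the LLL-reduced basis on the pre-Jordan side, the rows from \cite[Table~4]{Bremner10} on the Jordan-diproduct side), verify that both characters equal $(16,4,0,1,0)$, and invoke semisimplicity of $FS_4$ to conclude the $S_4$-module isomorphism. The paper's argument is exactly this character comparison, with no additional ingredients.
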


As we will see throughout this paper, this fact is not accidental:
in each degree $4 \le n \le 8$, there is an $S_n$-module isomorphism
between the new identities satisfied by the pre-Jordan product in the free dendriform algebra
and the new identities satisfied by the quasi-Jordan product in the free associative dialgebra.
(An identity is called new if it does not follow from identities of lower degree.)


\section{Representation theory of the symmetric group}

Since the sizes of the matrices we use are growing, it is useful to introduce another method
to compute and compare identities.
This new method is more sophisticated and involves deeper knowledge of the representation theory
of the symmetric group.
The basic idea consists in breaking down a module of identities into its irreducible components
and doing the computations in these smaller submodules.
Furthermore, using representation theory allows us to consider only the basic monomial in each
association type (the monomial with the identity permutation of the variables);
without representation theory, we have to use all permutations of the variables, which significantly
increases the size of the computations.

\subsection*{Theoretical background}

Given $f(x_1, \dots, x_n)$, a multilinear nonassociative polynomial of degree $n$,
which we regard as a polynomial identity for an algebra,
we collect its terms by association type and write $f = f_1 + \dots + f_t$,
where $t$ is the number of association types.
In each association type a term can be identified by its coefficient $c \in F$ and
the permutation $\pi$ of the variables in the monomial.
Thus each $f_i$ can be expressed as an element
  \[
  g_i = \sum_{\pi \in S_n} c_{i \pi} \pi,
  \]
of the group algebra $FS_n$, and hence $f$ can be identified with the element
$(g_1, \dots, g_t)$ of $M = FS_n \oplus \cdots \oplus FS_n$ ($t$ summands).
If $\pi \in S_n$ then $(\pi g_1, \dots, \pi g_t)$ is also an identity,
which represents the identity $f$ applied to a permutation of its arguments.
Since linear combinations of identities are also identities, it follows that
$(gg_1, \dots, gg_t)$ is an identity for any element $g \in FS_n$.
Thus $M$ is a module for the group algebra $FS_n$ and the set of identities
which are consequences of $f$ is a submodule of $M$.

The partitions $\lambda$ of $n$ are in bijection with the isomorphism classes
of irreducible representations of $S_n$;
we write $d_\lambda$ for the dimension of the irreducible representation corresponding to $\lambda$.
By Wedderburn's theorem, in the case $F = \mathbb{Q}$,
the group algebra $\mathbb{Q}S_n$ is isomorphic to the direct sum of matrix algebras
of size $d_\lambda \times d_\lambda$:
\begin{align*}
	\rho\colon
\mathbb{Q} S_n \overset{\cong}{\longrightarrow}
	\underset{\lambda}{\bigoplus}
	M_{d_\lambda}(\mathbb{Q}).
\end{align*}
If $\rho_\lambda$ is the projection of the group algebra onto the matrix subalgebra corresponding to
the partition $\lambda$, then we associate to each element of the group algebra a matrix of size
$d_\lambda \times d_\lambda$.
Thus $[ \, \rho_\lambda(g_1) \, \cdots \, \rho_\lambda(g_t) \, ]$ is a matrix of size
$d_\lambda \times td_\lambda$ which corresponds to the identity $f$ in the representation associated
to $\lambda$.
In the same way,
  \[
  [ \, \rho_\lambda(gg_1) \, \cdots \, \rho_\lambda(gg_t) \, ]
  =
  \rho_\lambda(g)
  [ \, \rho_\lambda(g_1) \, \cdots \, \rho_\lambda(g_t) \, ]
  \]
corresponds to a sequence of row operations applied to the matrix representing $f$.
(We are primarily interested in computing the RCFs of these matrices,
which includes the possibility of a nonzero row being reduced to zero.)
The matrices $\rho_\lambda(g)$ are computed using the algorithm of Clifton \cite{Clifton};
see also \cite[Figure 1]{BremnerPeresi11}.

Two identities are equivalent if and only if they generate the same submodule of $M$.
In other words, for each irreducible representation $\lambda$ the matrices representing the two
identities have the same row space.
Stacking vertically the matrices corresponding to $k$ identities $f^{(1)}, \dots, f^{(k)}$ gives a
matrix of size $k d_\lambda \times t d_\lambda$. Each row of this matrix represents an identity implied
by $f^{(1)}, \dots, f^{(k)}$. Row operations on this matrix replace rows with linear combination of rows.
Thus the nonzero rows of the RCF are a set of linearly independent
generators for the submodule of $M$ generated by $f^{(1)}, \dots, f^{(k)}$ corresponding to
partition $\lambda$.
Each of these nonzero rows represents a copy of the irreducible module $[\lambda]$
in the decomposition of this submodule.

\subsection*{Comparison with other methods}

The method used in \cite{BremnerPeresi11} (see Table 3) could be used to find 
all the polynomial identities in degree $n$ satisfied by the pre-Jordan product
in the free dendriform algebra, as follows.
Let $t$ be the number of binary nonassociative types in degree $n$,
and let $s$ be the number of normal $D$-types in degree $n$.
Let $\lambda$ be a partition of $n$ with irreducible representation of dimension $d_\lambda$.
We write $X_\lambda$ for the matrix of size $t d_\lambda \times s d_\lambda$ consisting of
$d_\lambda \times d_\lambda$ blocks.
To fill this matrix, we first compute the normalized expansions of the basic nonassociative
monomials in the $t$ association types.
For each $i = 1, \dots, t$ and $j = 1, \dots, s$ we collect the terms in the expansion of the
$i$-th basic monomial which have the $j$-th normal $D$-type.
In the matrix $X_\lambda$, the block in position $(i,j)$ is the representation matrix for
the group algebra element corresponding to the $j$-th normal $D$-type in the $i$-th expansion.
We now form the augmented matrix $[X_\lambda|I]$ where $I$ is the identity matrix of order $t d_\lambda$.
Assume that $X_\lambda$ has rank $r$.
We compute the RCF of  $[X_\lambda|I]$ to obtain $[R|U]$
where $R$ is the RCF of $X_\lambda$ and $U$ is an invertible matrix for which $U X_\lambda = R$.
The last $td_\lambda-r$ rows of $U$ represent the identities in degree $n$ and partition $\lambda$
satisfied by the pre-Jordan product in the free dendriform algebra.
Then we also have $X_\lambda = U^{-1} R$.
Since the last $td_\lambda-r$ rows of $R$ are zero, the first $r$ columns of $U^{-1}$
show how to express the rows of $X_\lambda$ in terms of the nonzero rows of $R$,
and since the nonzero rows of $R$ are linearly independent,
the first $r$ columns of $U^{-1}$ are uniquely determined.
Hence the first $r$ rows of $U$ are uniquely determined.
Moreover, the last $td_\lambda-r$ rows of U are uniquely determined since they form a submatrix in RCF
(these rows are the lower right block which has the identities we are looking for).
Since $UX_\lambda = R$, it follows that rows $r+1$ to $td_\lambda$ of $U$ form a (canonical) basis of
the left nullspace of $X_\lambda$, which coincides with the right nullspace of $X_\lambda^t$.
So if we compute the RCF of $X_\lambda^t$, find the canonical basis of its nullspace, put these basis
vectors into another matrix, and then compute its RCF, we will obtain the same matrix as the
last $td_\lambda-r$ rows of $U$.
We can therefore obtain the same results using $X_\lambda^t$,
which in general is much smaller than $[X_\lambda|I]$.

\subsection*{Example: degree 4}

We consider again the identities in degree 4 for the pre-Jordan product
in the free dendriform algebra.

There are 5 partitions of 4, namely 4, 31, 22, 211, 1111; the corresponding irreducible representations
of $S_4$ have dimensions 1, 3, 2, 3, 1 respectively.
For each partition $\lambda$ with corresponding dimension $d_\lambda$, we construct a matrix $X_4$
of size $5 d_\lambda \times 14 d_\lambda$.
This matrix consists of 5 rows and 14 columns of $d_\lambda \times d_\lambda$ blocks.
To fill this matrix, we first compute the normalized expansions of the basic nonassociative monomials
in the 5 association types.
For each $i = 1, \dots, 5$ and $j = 1, \dots, 14$ we collect the terms in the expansion of the
$i$-th basic monomial which have the $j$-th normal $D$-type.
For example, for $i = 1$ the sorted expansion of $((x \cdot y) \cdot z) \cdot u$ is
  \begin{align*}
  &
  \big[
  u  z  y  x
  \big]_1
  +
  \big[
  u  z  x  y
  \big]_5
  +
  \big[
  u  y  x  z
  \big]_7
  +
  \big[
  z  y  x  u
  \big]_8
  +
  \big[
  u  x  y  z
  -
  u  y  x  z
  \big]_9
  +
  \big[
  z  x  y  u
  -
  z  y  x  u
  \big]_{10}
  \\
  &
  +
  \big[
  x  y  z  u
  -
  y  x  z  u
  \big]_{12}
  +
  \big[
  -
  x  y  z  u
  +
  y  x  z  u
  -
  z  y  x  u
  \big]_{13}
  +
  \big[
  -
  z  x  y  u
  +
  y  x  z  u
  \big]_{14}.
  \end{align*}
The subscripts indicate the normal $D$-type in the order given in \eqref{normalDtypes};
within each pair of brackets, we give the corresponding element of the group algebra $\mathbb{Q} S_4$.
In the matrix $X_4$, the block in position $(i,j)$ is the representation matrix for
the group algebra element corresponding to the $j$-th normal $D$-type in the $i$-th expansion.
The nullspace of the transpose matrix $X_4^t$ represents the identities for the pre-Jordan
corresponding to representation $[\lambda]$.

In particular, we consider $\lambda = 4$ with $d_\lambda = 1$.
Isomorphism \eqref{N4decomp} shows that the representation $[4]$ has multiplicity 2
in the $S_4$-module of identities in degree 4 for the pre-Jordan product.
Thus we expect that the nullspace of $X_4^t$ will be 2-dimensional.
We calculate $X_4$,
  \[
  \left[
  \begin {array}{rrrrrrrrrrrrrr}
  1&0&0&0&1&0&1&1&0&0&0&0&-1&0 \\
  1&0&2&0&1&0&1&1&0&0&0&0& 1&0 \\
  1&1&1&1&1&1&1&1&0&0&0&0& 0&0 \\
  1&1&1&1&1&1&1&1&0&0&0&0& 0&0 \\
  1&3&1&3&1&3&1&1&0&0&0&0& 0&0
  \end {array}
  \right],
  \]
and obtain the row canonical form of $X_4^t$ (omitting zero rows),
  \[
  \left[ \begin {array}{rrrrr}
  1&0&0&0&-1 \\
  0&1&0&0&-1 \\
  0&0&1&1& 3
  \end {array}
  \right],
  \]
showing that the nullity is 2.

Similar calculations with $\lambda = 31, 22, 211, 1111$ produce nullities 3, 1, 1, 0 respectively,
agreeing with the isomorphism \eqref{N4decomp}.


\section{Non-existence of special identities in degrees 5, 6 and 7}

The pre-Jordan product satisfies the defining identities for pre-Jordan algebras in degree 4.
It follows that the pre-Jordan product satisfies identities in every degree $n \ge 4$,
which we call the liftings of the defining identities.

\begin{definition}
From a multilinear polynomial identity $f( x_1, x_2, \dots, x_n )$ in degree $n$
for a nonassociative algebra with one binary operation $x \cdot y$,
we obtain $n+2$ identities in degree $n+1$ which generate all the consequences
of $f$ in degree $n+1$ as a module over the symmetric group $S_{n+1}$.
We introduce a new variable $x_{n+1}$ and perform $n$ substitutions and 2 multiplications,
obtaining the \textbf{liftings} of $f$ to degree $n+1$:
  \begin{align*}
	&
	f( x_1 \cdot x_{n+1}, x_2, \dots, x_n ),
	\;\;
	f( x_1, x_2 \cdot x_{n+1}, \dots, x_n ),
	\;\;
	\dots\,,
	\;\;
	f( x_1, x_2, \dots, x_n \cdot x_{n+1} ),
	\\
	&
	f( x_1, x_2, \dots, x_n ) \cdot x_{n+1},
	\;\;
	x_{n+1} \cdot f( x_1, x_2, \dots, x_n ).
  \end{align*}
\end{definition}

\begin{definition}
Let $\Omega = \{ \omega_1, \dots, \omega_\ell \}$ be operation symbols of arbitrary arities,
and let $\mathcal{V}$ be the variety of all $\Omega$-algebras.
Let $\mathcal{W}$ be a variety of algebras defined by multilinear identities.
Let $P = \{ p_1, \dots, p_\ell \}$ be multilinear polynomials in $\mathcal{W}$ such that
the degree of $p_i$ equals the arity of $\omega_i$.
The \textbf{expansion map} in degree $n$ has domain equal to the multilinear subspace in degree $n$
of the free $\mathcal{V}$-algebra and codomain equal to the multilinear subspace in degree $n$
of the free $\mathcal{W}$-algebra; it is defined by replacing every occurrence of $\omega_i$ by $p_i$.
We choose a degree $m$ and consider the kernels of the expansion maps in all degrees $n \le m$.
We write $f_1, \dots, f_k$ for a set of $S_n$-module generators of these kernels.
Any element of the kernel of the expansion map in degree $n' > m$ which is not a consequence
of the liftings of $f_1, \dots, f_k$ to degree $n'$ will be called a \textbf{special identity}
for the operations $P$ in every $\mathcal{W}$-algebra.
\end{definition}

In our case, $\Omega = \{ x \cdot y \}$ consists of a single bilinear operation,
$\mathcal{W}$ is the variety of dendriform algebras,
$P = \{ x \succ y + y \prec x \}$ consists of the pre-Jordan product in the free dendriform algebra,
and $m = 4$ (the degree of the defining identities for pre-Jordan algebras).

\subsection*{Degree 5}

We first need to lift the defining identities $PJ_1$, $PJ_2$ for pre-Jordan algebras (Theorem \ref{pj4})
to degree 5.
For each $i = 1, 2$ we perform 4 substitutions and 2 multiplications, obtaining 12 identities in degree 5:
  \begin{equation}
  \label{lift5}
  \left\{
  \begin{array}{lll}
  PJ_i( x \cdot v, y, z, u ), &\quad
  PJ_i( x, y \cdot v, z, u ), &\quad
  PJ_i( x, y, z \cdot v, u ),
  \\[2pt]
  PJ_i( x, y, z, u \cdot v ), &\quad
  PJ_i( x, y, z, u ) \cdot v, &\quad
  v \cdot PJ_i( x, y, z, u ).
  \end{array}
  \right.
  \end{equation}
In degree 5 we have $t = 14$ association types for the binary nonassociative product.
In the free dendriform algebra we have $s = 42$ normal $D$-types.
There are 7 distinct irreducible representations of the symmetric group $S_5$,
with dimensions $d_\lambda = 1$, 4, 5, 6, 5, 4, 1 corresponding to partitions $\lambda = 5$,
41, 32, 311, 221, 2111, 11111.
The representation matrices $\rho_i(\pi)$ ($1 \le i \le 7$)
are given by the projections onto the corresponding simple ideals in the direct sum
decomposition of the group algebra:
  \[
  \mathbb{Q} S_5
  =
  \mathbb{Q} \oplus
  M_4(\mathbb{Q}) \oplus
  M_5(\mathbb{Q}) \oplus
  M_6(\mathbb{Q}) \oplus
  M_5(\mathbb{Q}) \oplus
  M_4(\mathbb{Q}) \oplus
  \mathbb{Q}.
  \]
For each partition $\lambda$ we construct a block matrix $X_\lambda$ as described in the previous section.
It has size $14 d_\lambda \times 42 d_\lambda$;
the rows of blocks are labeled by the basic nonassociative monomials of degree 5,
and the columns of blocks are labeled by the normal $D$-types of degree 5.
Thus each row of blocks contains the representation matrices of the terms of the expansion
of the corresponding basic monomial into the free dendriform algebra.
We compute $\mathrm{RCF}(X_\lambda^t)$,
find the rank and the nullity of $X_\lambda^t$,
extract a basis for the nullspace of $X_\lambda^t$,
put the basis vectors into the rows of another matrix $N_\lambda$,
and compute $\mathrm{RCF}(N_\lambda)$.
The nonzero rows of $\mathrm{RCF}(N_\lambda)$ give a canonical set of generators for the $S_5$-module
of identities corresponding to partition $\lambda$.
See the 4 columns labelled ``all'' in Table \ref{tableranksdegree5}.

We need to check that these identities are consequences of the pre-Jordan identities.
Thus we compare them with the liftings to degree 5 of the defining identities for pre-Jordan algebras.
For each partition $\lambda$ of 5 we construct a $12d_\lambda \times 14d_\lambda$ matrix $L_\lambda$.
The rows of blocks correspond to the 12 liftings \eqref{lift5} of the defining identities
for pre-Jordan algebras.
The columns of blocks are labeled by the normal $D$-types of degree 5.
For each $i = 1, \dots, 12$ and $j = 1, \dots, 14$ we collect the terms in the $i$-th lifting
which have the $j$-th normal $D$-type, and store the corresponding representation matrix in
the block in position $(i,j)$.
We compute $\mathrm{RCF}(L_\lambda)$;
the nonzero rows of $\mathrm{RCF}(L_\lambda)$ give a canonical set of generators for the $S_5$-module
of lifted identities corresponding to partition $\lambda$.
See the 3 columns labelled ``lifted'' in Table~\ref{tableranksdegree5}.

For every partition $\lambda$, the rank of the lifted identities equals the nullity of all identities;
moreover, $\mathrm{RCF}(N_\lambda) = \mathrm{RCF}(L_\lambda)$, omitting zero rows.
We conclude that there are no special identities in degree 5
for the pre-Jordan product in the free dendriform algebra.

\begin{table}
   \begin{center}
   \begin{tabular}{rlr|rrr|rrrr|r}
   & & & \multicolumn{3}{|c|}{lifted ($L_\lambda$)} & \multicolumn{4}{|c|}{all ($X_\lambda^t$)} \\
   \# & $\lambda$ & $d_\lambda$ & rows & cols & rank &  rows &  cols & rank & nullity & new \\ \midrule
   1  & 5         &  1  &  12  &  14  &   7  &    42 &    14 &   7  &   7 & 0 \\
   2  & 41        &  4  &  48  &  56  &  21  &   168 &    56 &  35  &  21 & 0 \\
   3  & 32        &  5  &  60  &  70  &  20  &   210 &    70 &  50  &  20 & 0 \\
   4  & 311       &  6  &  72  &  84  &  22  &   252 &    84 &  62  &  22 & 0 \\
   5  & 221       &  5  &  60  &  70  &  14  &   210 &    70 &  56  &  14 & 0 \\
   6  & 2111      &  4  &  48  &  56  &   9  &   168 &    56 &  47  &   9 & 0 \\
   7  & 11111     &  1  &  12  &  14  &   1  &    42 &    14 &  13  &   1 & 0
   \end{tabular}
   \end{center}
   \smallskip
   \caption{Degree 5: matrix ranks for all representations}
   \label{tableranksdegree5}
\bigskip
   \begin{center}
   \begin{tabular}{rlr|rrr|rrrr|r}
   & & & \multicolumn{3}{|c|}{lifted ($L_\lambda$)} & \multicolumn{4}{|c|}{all ($X_\lambda^t$)} \\
   \# & $\lambda$ & $d_\lambda$ & rows & cols & rank &  rows &  cols & rank & nullity & new \\ \midrule
   1  & 6         &  1  &  84  &  42  &  27  &   132 &    42 &  15  &  27 & 0 \\
   2  & 51        &  5  & 420  & 210  & 110  &   660 &   210 & 100  & 110 & 0 \\
   3  & 42        &  9  & 756  & 378  & 170  &  1188 &   378 & 208  & 170 & 0 \\
   4  & 411       & 10  & 840  & 420  & 176  &  1320 &   420 & 244  & 176 & 0 \\
   5  & 33        &  5  & 420  & 210  &  87  &   660 &   210 & 123  &  87 & 0 \\
   6  & 321       & 16  &1344  & 672  & 247  &  2112 &   672 & 425  & 247 & 0 \\
   7  & 3111      & 10  & 840  & 420  & 138  &  1320 &   420 & 282  & 138 & 0 \\
   8  & 222       &  5  & 420  & 210  &  67  &   660 &   210 & 143  &  67 & 0 \\
   9  & 2211      &  9  & 756  & 378  & 112  &  1188 &   378 & 266  & 112 & 0 \\
  10  & 21111     &  5  & 420  & 210  &  53  &   660 &   210 & 157  &  53 & 0 \\
  11  & 111111    &  1  &  84  &  42  &   8  &   132 &    42 &  34  &   8 & 0 \\
   \end{tabular}
   \end{center}
   \smallskip
   \caption{Degree 6: matrix ranks for all representations}
   \label{tableranksdegree6}
\bigskip
   \begin{center}
   \begin{tabular}{rlr|rrr|rrrr|r}
   & & & \multicolumn{3}{|c|}{lifted ($L_\lambda$)} & \multicolumn{4}{|c|}{all ($X_\lambda^t$)} \\
   \# & $\lambda$ & $d_\lambda$ & rows & cols & rank &  rows &  cols & rank & nullity & new \\ \midrule
    1 & 7         &  1  &  133 &  132 &   95 &   429  &  132  &   37 &   95 &   0 \\
    2 & 61        &  6  &  798 &  792 &  504 &  2574  &  792  &  288 &  504 &   0 \\
    3 & 52        & 14  & 1862 & 1848 & 1060 &  6006  & 1848  &  788 & 1060 &   0 \\
    4 & 511       & 15  & 1995 & 1980 & 1099 &  6435  & 1980  &  881 & 1099 &   0 \\
    5 & 43        & 14  & 1862 & 1848 &  992 &  6006  & 1848  &  856 &  992 &   0 \\
    6 & 421       & 35  & 4655 & 4620 & 2333 & 15015  & 4620  & 2287 & 2333 &   0 \\
    7 & 4111      & 20  & 2660 & 2640 & 1259 &  8580  & 2640  & 1381 & 1259 &   0 \\
    8 & 331       & 21  & 2793 & 2772 & 1333 &  9009  & 2772  & 1439 & 1333 &   0 \\
    9 & 322       & 21  & 2793 & 2772 & 1269 &  9009  & 2772  & 1503 & 1269 &   0 \\
   10 & 3211      & 35  & 4655 & 4620 & 2035 & 15015  & 4620  & 2585 & 2035 &   0 \\
   11 & 31111     & 15  & 1995 & 1980 &  800 &  6435  & 1980  & 1180 &  800 &   0 \\
   12 & 2221      & 14  & 1862 & 1848 &  751 &  6006  & 1848  & 1097 &  751 &   0 \\
   13 & 22111     & 14  & 1862 & 1848 &  705 &  6006  & 1848  & 1143 &  705 &   0 \\
   14 & 211111    &  6  &  798 &  792 &  269 &  2574  &  792  &  523 &  269 &   0 \\
   15 & 1111111   &  1  &  133 &  132 &   38 &   429  &  132  &   94 &   38 &   0
   \end{tabular}
   \end{center}
   \smallskip
   \caption{Degree 7: matrix ranks for all representations}
   \label{tableranksdegree7}
\end{table}

Before proceeding to degree 6, we make some comments.
If there are special identities in some degree $n \ge 5$,
then there will also be special identities in every degree $> n$.
To make this inference, we need to assume that the domain of the expansion
map is the free unital nonassociative algebra.
If $f( x_1, \dots, x_n )$ is a special identity in degree $n$,
then in the lifting $f( x_1 \cdot x_{n+1}, \dots, x_n )$ we can set $x_{n+1} = 1$,
showing that if the lifting is not special then neither is the original identity.
Thus, if we can prove that there are no special identities in degree 7,
it follows that there are also no special identities in degrees 5 or 6.
However, for completeness, we include the results for degree 6.

\subsection*{Degree 6}

For degree 6 we proceed analogously to the degree 5 case.
There are 12 lifted pre-Jordan identities in degree 5, each of which produces 7 liftings to degree 6,
for a total of 84 lifted identities in degree 6.
In degree 6, there are 42 binary association types, and 132 normal $D$-types.
Hence for each partition $\lambda$ of 6, with corresponding dimension $d_\lambda$,
the matrix $L_\lambda$ of lifted identities has size $84 d_\lambda \times 42 d_\lambda$,
and the expansion matrix $X_\lambda^t$ has size $132 d_\lambda \times 42 d_\lambda$.
In every case, we find that the rank of $L_\lambda$ equals the nullity of $X_\lambda^t$,
and so there are no new identities in degree 6; see Table~\ref{tableranksdegree6}.

At this point, the number of lifted identities is increasing rapidly,
and so the matrix $L_\lambda$ is becoming very large.
We can reduce the amount of memory required as follows.
Suppose that there are $k$ lifted identities in degree $n$.
Write $c_n$ for the number of binary association types in degree $n$.
Normally, the matrix $L_\lambda$ would have size $k d_\lambda \times c_n d_\lambda$.
However, if $k > c_n$, we can take $L_\lambda$ to have size $(c_n+1)d_\lambda \times c_n d_\lambda$.
We then process the lifted identities one at a time, storing their representation matrices
in the last row of $d_\lambda \times d_\lambda$ blocks, and compute the RCF of the matrix
after each lifted identity.
This saves memory but uses more time since we must perform many more row reductions.

\subsection*{Degree 7}

There are 84 lifted pre-Jordan identities in degree 6, each of which produces 8 liftings to degree 7,
for a total of 672 lifted identities in degree 7.
In degree 7, there are 132 binary association types.
Hence we take $L_\lambda$ to have size $133 d_\lambda \times 132 d_\lambda$.
In degree 7, there are 429 normal $D$-types, and so
the expansion matrix $X_\lambda^t$ has size $429 d_\lambda \times 132 d_\lambda$.
In every case, we find that the rank of $L_\lambda$ equals the nullity of $X_\lambda^t$,
and so there are no new identities in degree 7; see Table~\ref{tableranksdegree7}.

For degree 7, we kept track of which lifted identities increased the rank of $L_\lambda$
for at least one partition $\lambda$.
This reduced the number of pre-Jordan identities in degree 7 from 672 to 133.
Each of these has 9 liftings to degree 8, for a total of 1197 (much smaller than $9 \cdot 672 = 6048$).
In this way, we substantially speed up the computations in degree 8.


\section{Special identities in degree 8}

Owing to the size of the representation matrices in degree 8, and limits on available memory,
we had to split the problem into smaller parts and process each representation in a separate
Maple worksheet; this takes more CPU time but requires less memory.
In degree 8 we find new (and hence special) identities in partitions
$\lambda = 431$, 422, 332, 3311 and 3221, see Table \ref{tableranksdegree8}.

\begin{table}[h]
\small
   \begin{center}
   \begin{tabular}{rlr|rrr|rrrr|r}
   & & & \multicolumn{3}{|c|}{lifted ($L_\lambda$)} & \multicolumn{4}{|c|}{all ($X_\lambda^t$)} \\
   \# & $\lambda$ & $d_\lambda$ & rows & cols & rank &  rows &  cols & rank & nullity & new \\ \midrule
    1 & 8         &  1  &   479 &   429 &   339 &    479 &    429 &    90 &   339 &   0 \\
    2 & 71        &  7  &  3353 &  3003 &  2174 &   3353 &   3003 &   829 &  2174 &   0 \\
    3 & 62        & 20  &  9580 &  8580 &  5778 &   9580 &   8580 &  2802 &  5778 &   0 \\
    4 & 611       & 21  & 10059 &  9009 &  5939 &  10059 &   9009 &  3070 &  5939 &   0 \\
    5 & 53        & 28  & 13412 & 12012 &  7671 &  13412 &  12012 &  4341 &  7671 &   0 \\
    6 & 521       & 64  & 30656 & 27456 & 16930 &  30656 &  27456 & 10526 & 16930 &   0 \\
    7 & 5111      & 35  & 16765 & 15015 &  8951 &  16765 &  15015 &  6064 &  8951 &   0 \\
    8 & 44        & 14  &  6706 &  6006 &  3728 &   6706 &   6006 &  2278 &  3728 &   0 \\
    9 & 431       & 70  & 33530 & 30030 & 17721 &  33530 &  30030 & 12308 & 17722 &   1 \\
   10 & 422       & 56  & 26824 & 24024 & 13812 &  26824 &  24024 & 10211 & 13813 &   1 \\
   11 & 4211      & 90  & 43110 & 38610 & 21676 &  43110 &  38610 & 16934 & 21676 &   0 \\
   12 & 41111     & 35  & 16765 & 15015 &  8032 &  16765 &  15015 &  6983 &  8032 &   0 \\
   13 & 332       & 42  & 20118 & 18018 & 10039 &  20118 &  18018 &  7977 & 10041 &   2 \\
   14 & 3311      & 56  & 26824 & 24024 & 13056 &  26824 &  24024 & 10967 & 13057 &   1 \\
   15 & 3221      & 70  & 33530 & 30030 & 15853 &  33530 &  30030 & 14176 & 15854 &   1 \\
   16 & 32111     & 64  & 30656 & 27456 & 13956 &  30656 &  27456 & 13500 & 13956 &   0 \\
   17 & 311111    & 21  & 10059 &  9009 &  4289 &  10059 &   9009 &  4720 &  4289 &   0 \\
   18 & 2222      & 14  &  6706 &  6006 &  2978 &   6706 &   6006 &  3028 &  2978 &   0 \\
   19 & 22211     & 28  & 13412 & 12012 &  5803 &  13412 &  12012 &  6209 &  5803 &   0 \\
   20 & 221111    & 20  &  9580 &  8580 &  3929 &   9580 &   8580 &  4651 &  3929 &   0 \\
   21 & 2111111   &  7  &  3353 &  3003 &  1262 &   3353 &   3003 &  1741 &  1262 &   0 \\
   22 & 11111111  &  1  &   479 &   429 &   158 &    479 &    429 &   271 &   158 &   0
   \end{tabular}
   \end{center}
   \smallskip
   \caption{Degree 8: matrix ranks for all representations}
   \label{tableranksdegree8}
\end{table}

There are 429 binary association types in degree 8, and 1430 normal $D$-types.
To save memory, we processed the lifted identities 50 at a time,
so that $L_\lambda$ has size $( 429 + 50 ) d_\lambda \times 429 d_\lambda$.
We also used a similar strategy for the expansion matrix $X_\lambda^t$.
For each basic nonassociative monomial, we compute the expansion using the pre-Jordan product
and replace each term by its normalized form (as a linear combination of normal $D$-words).
We then split each expansion by considering the normal $D$-types 50 at a time;
in each iteration, we process those terms in the normalized expansion whose normal $D$-types
are among the current 50.
Hence $X_\lambda^t$ has size $( 429 + 50 ) d_\lambda \times 429 d_\lambda$,
the same as $L_\lambda$.
At the end of this computation, the ranks will be the same as if we had processed the expansions
all at once.

We do not give explicit expressions for the new special identities, since they involve far too many terms.

The column labelled `new' in Table \ref{tableranksdegree8} is identical to the corresponding column in
\cite[Table 7]{BremnerPeresi11} which gives the new identities in degree 8 for the Jordan diproduct in
an associative dialgebra.
A comparison of the results of this paper with those of \cite{BremnerPeresi11} 
shows that for $4 \le n \le 8$, the multiplicity of the irreducible $S_n$-module $[\lambda]$
in the space of new identities is the same for the pre-Jordan product in the free dendriform algebra
and the Jordan diproduct in the free associative dialgebra.
This result does not hold for $n = 3$, since the pre-Jordan product satisfies no
identities, but the Jordan diproduct satisfies right commutativity.
In the next section we state a conjecture which explains this fact in terms of Koszul duality of operads.


\section{Conjectures} \label{conjecturesection}

Our first conjecture extends the results of this paper and \cite{BremnerPeresi11}
to degree $n$.

\begin{conjecture}
Let $\dias$ and $\dend = \dias^!$ be the dual operads of associative dialgebras and
dendriform algebras respectively.

In degree $n \le 3$, every multilinear polynomial identity satisfied by the Jordan diproduct
in the free $\dias$-algebra is a consequence of right commutativity, $x(yz) - x(zy) \equiv 0$.
Let $P_n$ be the $S_n$-module of multilinear right-commutative polynomials in degree $n$
(the multilinear subspace in degree $n$ of the free right commutative algebra),
let $Q_n \subseteq P_n$ be the submodule of identities satisfied by the Jordan diproduct
in the free right-commutative algebra, and
let $R_n \subseteq Q_n$ be the submodule of identities which are consequences of identities
in lower degrees.

In degree $n \le 3$, there are no polynomial identities satisfied by the pre-Jordan product
in the free $\dend$-algebra.
Let $P'_n$ be the $S_n$-module of multilinear nonassociative polynomials in degree $n$
(the multilinear subspace in degree $n$ of the free nonassociative algebra),
let $Q'_n \subseteq P'_n$ be the submodule of identities satisfied by the pre-Jordan product
in the free dendriform algebra, and
let $R'_n \subseteq Q'_n$ be the submodule of identities which are consequences of identities
in lower degrees.

Then for all $n \ge 4$ we have an isomorphism of $S_n$-modules $Q_n / R_n \cong Q'_n / R'_n$.
\end{conjecture}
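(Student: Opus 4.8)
The plan is to replace the degree-by-degree computations of the preceding sections by a single structural argument at the level of operads. Both sides of the claimed isomorphism arise as kernels of operad morphisms. Writing $\mathcal{F}$ for the free operad on one binary generator $\mu$, the Jordan diproduct determines a morphism $\phi\colon \mathcal{F} \to \dias$ sending $\mu$ to $x \dashv y + y \vdash x$, and the pre-Jordan product determines a morphism $\phi'\colon \mathcal{F} \to \dend$ sending $\mu$ to $x \succ y + y \prec x$. With the notation of the conjecture, $Q_n$ and $Q'_n$ are the arity-$n$ components of $\ker\phi$ and $\ker\phi'$, while $R_n$ and $R'_n$ are the submodules generated by lower-arity relations; thus $Q_n/R_n$ and $Q'_n/R'_n$ are precisely the $S_n$-modules of \emph{new} (indecomposable) relations for $\phi$ and $\phi'$. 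The goal is to produce an $S_n$-module isomorphism between these new-relation spaces for every $n \ge 4$.

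First I would make precise the sense in which the two operations correspond under the Koszul duality $\dend = \dias^!$. Both are symmetrizations of the respective pairs of generators, and I would aim to show that the image suboperads $\mathrm{im}\,\phi \subseteq \dias$ and $\mathrm{im}\,\phi' \subseteq \dend$ are Koszul dual to one another, or at least related by the duality functor on the ambient operads. For quadratic Koszul dual operads the spaces of generating relations in a fixed arity are related by the sign twist $[\lambda] \mapsto [\lambda']$ coming from the operadic suspension; accordingly, the conceptually clean statement I expect to prove is the \emph{twisted} isomorphism $Q'_n/R'_n \cong (Q_n/R_n) \otimes \mathrm{sgn}$, where $\otimes\,\mathrm{sgn}$ sends each irreducible $[\lambda]$ to its conjugate $[\lambda']$.

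To pass from the twisted isomorphism to the untwisted statement of the conjecture, I would then prove that $Q_n/R_n$ is self-conjugate, that is, fixed up to isomorphism by $\otimes\,\mathrm{sgn}$, for all $n \ge 4$. The degree-8 data of Table \ref{tableranksdegree8} already displays this symmetry: the new identities occur in partitions $431, 422, 332, 3311, 3221$, which pair up under conjugation as $431 \leftrightarrow 3221$ and $422 \leftrightarrow 3311$ with $332$ self-conjugate, and the multiplicities match within each pair. In general I would seek a compatible pair of involutions on the domain and codomain of the expansion map, intertwining the $S_n$-action with its sign twist and arising from the left-right anti-automorphisms $\dashv \leftrightarrow \vdash$ and $\succ \leftrightarrow \prec$; such a symmetry would descend to the space of new relations and yield the required self-conjugacy.

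The hard part will be twofold. Neither $\mathrm{im}\,\phi$ nor $\mathrm{im}\,\phi'$ is quadratic---the relations begin in arity $4$ and persist through arity $8$ and beyond---so the classical quadratic Koszul-duality dictionary cannot be invoked verbatim, and one must work with an inhomogeneous or minimal-model refinement in order to control the generating relations arity by arity. Establishing the self-conjugacy of $Q_n/R_n$ uniformly in $n$ is equally delicate, since it asserts a hidden symmetry that the finite computations verify only through degree $8$. The computations of the previous sections furnish both a base of evidence and an explicit template against which any proposed structural isomorphism must be checked; converting that evidence into a uniform, representation-theoretic argument is the principal remaining obstacle.
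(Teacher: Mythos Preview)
The statement is a \emph{conjecture}: the paper offers no proof, only computational evidence in degrees $4 \le n \le 8$. So there is nothing in the paper to compare your argument against, and your proposal must be judged on its own merits.

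Unfortunately, both structural steps of your plan are contradicted by the paper's own data in degree $4$. The decomposition \eqref{N4decomp},
\[
Q'_4/R'_4 \;\cong\; 2[4] \oplus 3[31] \oplus [22] \oplus [211],
\]
is \emph{not} self-conjugate: tensoring with the sign representation would send it to $2[1111] \oplus 3[211] \oplus [22] \oplus [31]$, which has different multiplicities (for instance, $[4]$ appears with multiplicity $2$ while its conjugate $[1111]$ appears with multiplicity $0$). The Proposition immediately following \eqref{N4decomp} asserts that $Q_4/R_4$ has the \emph{same} decomposition, so it is not self-conjugate either. Hence your second step, proving self-conjugacy of $Q_n/R_n$ for all $n \ge 4$, fails at $n=4$. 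Worse, your first step fails too: since $Q_4/R_4 \cong Q'_4/R'_4$ and neither is self-conjugate, the twisted isomorphism $Q'_n/R'_n \cong (Q_n/R_n) \otimes \mathrm{sgn}$ is already false at $n=4$. The degree-$8$ data happens to be self-conjugate, but this is not the general pattern.

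The deeper issue is that Koszul duality relates the \emph{relations} of dual quadratic operads via a sign twist, but the objects $Q_n/R_n$ and $Q'_n/R'_n$ are not spaces of quadratic relations of dual operads; they are indecomposable pieces of the kernels of two morphisms \emph{into} dual operads from the same free operad. There is no off-the-shelf duality theorem that compares such kernels, and the absence of a sign twist in the actual data shows that whatever mechanism produces the conjectured isomorphism is not the standard Koszul-duality sign correspondence. Any genuine proof will have to explain why the two kernels match \emph{without} a twist, and the degree-$4$ computation is the first test case any proposed argument must pass.
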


This conjecture suggests that a similar isomorphism might hold more generally
between the polynomial identities for corresponding bilinear operations in the free algebras
over any dual pair of binary quadratic non-$\Sigma$ operads.
(We recall from Loday \cite{Loday01} that a non-$\Sigma$ operad $\mathsf{O}$ is defined by the
conditions that the operations have no symmetry and the relations contain only the identity
permutation of the variables; this implies that each space $\mathsf{O}(n)$ is a direct sum of
copies of the regular representation of $S_n$.)
The duality theory for quadratic operads is well-developed, but we need to clarify the meaning
of the phrase `corresponding bilinear operations'.

The space of operations $\dias(2)$ is the tensor product of the vector space $V$ with basis
$\{ \dashv, \vdash \}$ with the group algebra $F S_2$; thus $\dias(2)$ has basis
  \[
  x \dashv y, \quad y \dashv x, \quad x \vdash y, \quad y \vdash x.
  \]
Similarly, the space of operations $\dend(2)$ is the tensor product of the vector space $W$
with basis $\{ \prec, \succ \}$ with the group algebra $F S_2$; thus $\dend(2)$ has basis
  \[
  x \prec y, \quad y \prec x, \quad x \succ y, \quad y \succ x.
  \]
By the duality between $\dias$ and $\dend$, we know that
$\dend(2) = \dias(2)^\ast \otimes_{FS_2} (\mathrm{sgn})$
where $(\mathrm{sgn})$ is the signature representation.

The Jordan diproduct in $\dias(2) = V \otimes FS_2$ is the element $x \dashv y + y \vdash x$.
The annihilator of this element in $\dias(2)^\ast$ is $( x \dashv y )^\ast - ( y \vdash x )^\ast$;
recall that $S_2$ acts only on the second factor in the tensor product.
The corresponding element in $\dend(2)$ requires twisting by the signs of the permutations,
giving $( x \dashv y )^\ast + ( y \vdash x )^\ast$,
and this represents the pre-Jordan product $x \succ y + y \prec x$.
(We identify $\dashv$ with $\succ$ and $\vdash$ with $\prec$,
opposite to Loday \cite[Proposition 8.3]{Loday01}.)
In other words, we can identify the spaces of operations in this dual pair of operads in a natural
way so that the pre-Jordan product corresponds to the Jordan diproduct.
For further details, see \cite[Proposition B.3]{Loday01}.

Now consider a variety $V$ of algebras with $k$ binary operations,
and assume that the operations have no symmetry.
The space $E$ of generating operations is isomorphic as an $S_2$-module
to the direct sum of $k$ copies of $F S_2$.
Therefore both $E^\ast$ and $E^\vee = E^\ast \otimes (\mathrm{sgn})$ are canonically isomorphic to $E$.
This means that for any bilinear operation $M$ in the variety $V$
(that is, any element $M$ of $E$),
there is a canonical corresponding bilinear operation $M^\vee$ in any variety
$V^\vee$ defined by the operations $E^\vee$.
In particular, we consider the case where the variety $V$ is defined by a quadratic operad,
and the variety $V^\vee$ is defined by the dual operad.

\begin{conjecture}
Let $\mathsf{O}$ and $\mathsf{O}^!$ be a dual pair of binary quadratic non-$\Sigma$ operads.
Let $\circ$ be a bilinear operation in $\mathsf{O}(2)$,
and let $\circ'$ be the corresponding operation in $\mathsf{O}^!(2)$.
Let $\mathcal{V}$ be the variety defined by the multilinear polynomial identities of degree $\le 3$
satisfied by the operation $\circ$ in the free $\mathsf{O}$-algebra,
and let $\mathcal{V}'$ be the variety defined by the multilinear polynomial identities of degree $\le 3$
satisfied by the operation $\circ'$ in the free $\mathsf{O}^!$-algebra.

Let $P_n$ be the $S_n$-module of multilinear $\mathcal{V}$-polynomials in degree $n$
(the multilinear subspace in degree $n$ of the free $\mathcal{V}$-algebra),
let $Q_n \subseteq P_n$ be the submodule of identities satisfied by the operation $\circ$
in the free $\mathsf{O}$-algebra, and
let $R_n \subseteq Q_n$ be the submodule of identities which are consequences of identities
in lower degrees.

Let $P'_n$ be the $S_n$-module of multilinear $\mathcal{V}'$-polynomials in degree $n$
(the multilinear subspace in degree $n$ of the free $\mathcal{V}'$-algebra),
let $Q'_n \subseteq P'_n$ be the submodule of identities satisfied by the operation $\circ'$
in the free $\mathsf{O}^!$-algebra, and
let $R'_n \subseteq Q'_n$ be the submodule of identities which are consequences of identities
in lower degrees.

Then for all $n \ge 4$ we have an isomorphism of $S_n$-modules $Q_n / R_n \cong Q'_n / R'_n$.
\end{conjecture}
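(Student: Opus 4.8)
The plan is to recast the conjecture in purely operadic terms and then push it through Koszul duality. Let $\mathsf{Mag}$ denote the free binary non-$\Sigma$ operad on one generator, and let $\psi\colon \mathsf{Mag} \to \mathsf{O}$ and $\psi'\colon \mathsf{Mag} \to \mathsf{O}^!$ be the operad morphisms sending the generator to $\circ$ and $\circ'$ respectively. The kernel of $\psi$ in arity $n$ is exactly the space of degree-$n$ identities satisfied by $\circ$, and the $S_n$-submodule generated by the liftings of lower-degree kernel elements is $R_n$. The first step is therefore to establish the clean reduction $Q_n/R_n \cong K_n$ and $Q'_n/R'_n \cong K'_n$ for $n \ge 4$, where $K_n$ (resp. $K'_n$) is the arity-$n$ piece of a minimal space of generating relations for the sub-operad $\mathrm{im}\,\psi \subseteq \mathsf{O}$ (resp. $\mathrm{im}\,\psi' \subseteq \mathsf{O}^!$) generated by $\circ$ (resp. $\circ'$). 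The degree-$\le 3$ relations, which are exactly what distinguish the intermediate varieties $\mathcal V$ and $\mathcal V'$, lie in $R_n$ as lower-degree consequences and so are absorbed into both presentations without affecting the new relations in degree $\ge 4$; verifying that this identification is one of $S_n$-modules is routine but must be done carefully.

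The second step is to transport $K_n$ to $K'_n$ using the duality between $\mathsf{O}$ and $\mathsf{O}^!$. Writing $E$ for the $S_2$-module of generating operations, the non-$\Sigma$ hypothesis makes $E$ a sum of copies of the regular representation $FS_2$, which is \emph{self-dual} and moreover \emph{invariant under tensoring with the sign representation}, since $FS_2 \otimes \mathrm{sgn} \cong FS_2$. This is precisely the fact that lets one identify $\circ'$ with the sign-twisted annihilator of $\circ$ in arity $2$. The strategy is to propagate this arity-$2$ orthogonality up the free operad: set up the Koszul pairing between $\mathsf{F}(E)(n)$ and $\mathsf{F}(E^\vee)(n)$, show that the relation ideal of $\psi$ and the relation ideal of $\psi'$ are exchanged as mutual orthogonal complements under this pairing, and deduce a degree-by-degree isomorphism between their minimal generating spaces. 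Because $K_n$ is defined over $\mathbb Q$ and hence self-dual, the only surviving discrepancy is the sign twist $\mathrm{sgn}_n$ that Koszul duality introduces in arity $n$.

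The main obstacle is controlling this sign twist. A naive orthogonality argument would yield $K'_n \cong K_n \otimes \mathrm{sgn}_n$, which is \emph{false}: already in degree $4$ one has $N_4 \cong 2[4] \oplus 3[31] \oplus [22] \oplus [211]$ by \eqref{N4decomp}, whereas $N_4 \otimes \mathrm{sgn} \cong 2[1111] \oplus 3[211] \oplus [22] \oplus [31]$, so an \emph{untwisted} isomorphism is what must be produced. The resolution I would pursue is that there are in fact two sources of sign: the operadic tree twist inherent in Koszul duality, and the twist coming from the identification $E^\vee \cong E$. Because the generating space is a sum of regular representations, these two twists should compensate along every planar tree, yielding an untwisted isomorphism $K_n \cong K'_n$. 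Making this cancellation precise is delicate, because $\mathrm{im}\,\psi$ is \emph{not} quadratic in general---in the pre-Jordan example it acquires relations in degree $4$ and again in degree $8$---so quadratic Koszul duality does not apply directly and one must work with the full bar--cobar (inhomogeneous) machinery, verifying that the resulting relation homology is invariant under the combined twist. Finally, the compensation genuinely breaks down in degree $3$, where $\mathcal V$ satisfies right commutativity but $\mathcal V'$ satisfies no identity; this is consistent with, and presumably the same phenomenon as, the exclusion of $n = 3$ from the statement, and isolating exactly why the anomaly is confined to degree $3$ is the crux of any complete proof.
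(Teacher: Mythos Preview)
The statement you are attempting to prove is a \emph{conjecture} in the paper, not a theorem; the paper offers no proof whatsoever, only computational evidence in degrees $4 \le n \le 8$ for the single pair $(\dias,\dend)$ with the Jordan diproduct and the pre-Jordan product. There is therefore no paper proof to compare your proposal against, and any complete argument would constitute a new result.

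That said, your sketch has a genuine gap at its central step. You assert that under the Koszul pairing on $\mathsf{F}(E)(n)$ the relation ideal of $\psi$ and the relation ideal of $\psi'$ are mutual orthogonal complements. Koszul duality supplies this orthogonality only for the \emph{defining quadratic relations} of $\mathsf{O}$ and $\mathsf{O}^!$ in arity $3$; it says nothing about the kernels of arbitrary operad maps $\mathsf{Mag}\to\mathsf{O}$ and $\mathsf{Mag}\to\mathsf{O}^!$. Those kernels live in $\mathsf{Mag}(n)$, not in $\mathsf{F}(E)(n)$, and they are controlled by the suboperads $\mathrm{im}\,\psi$ and $\mathrm{im}\,\psi'$, which are not themselves a Koszul-dual pair (indeed, as you note, $\mathrm{im}\,\psi$ is not even quadratic). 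No mechanism is given for transporting the arity-$2$ correspondence $\circ\leftrightarrow\circ'$ to an orthogonality of relation ideals in higher arity; this step is precisely the content of the conjecture, so the proposal is circular at this point.

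The sign-cancellation discussion confirms the difficulty rather than resolving it. You correctly observe that a naive duality would produce $K_n\otimes\mathrm{sgn}_n$, which already fails in degree $4$, and you then posit a second compensating twist coming from the identification $E^\vee\cong E$. But the regular representation $FS_2$ is only \emph{abstractly} isomorphic to $FS_2\otimes\mathrm{sgn}$; the isomorphism is not the identity, and tracking it through a planar tree with $n-1$ vertices does not obviously undo the global $\mathrm{sgn}_n$. Without an explicit computation showing the two twists cancel tree by tree, this remains a hope rather than an argument, and the degree-$3$ anomaly you mention is evidence that the cancellation, if it occurs at all, is not automatic.
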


A further generalization of this conjecture to operads which are not necessarily binary might
be possible, but we will not pursue this question here.


\section{Open problems}

Dendriform algebras can be obtained from associative algebras
by splitting the operation, and in the same way quadri-algebras can be obtained from dendriform
algebras.
This splitting procedure can be repeated any number of times, leading to a sequence of structures
with $2^n$ binary nonassociative operations, satisfying a coherent set of polynomial identities
in degree 3, which imply that the sum of the operations is associative.
These structures are often called Loday algebras \cite{Vallette08} or ABQR algebras
\cite{EbrahimiFardGuo05}; see also \cite{BaiBellierGuoNi12}.

As illustrated in Table \ref{BaiHouTable}, Lie and Jordan algebras can be obtained from bilinear
operations on associative algebras, and analogously, pre-Lie and pre-Jordan algebras can be
obtained from bilinear operations on dendriform algebras.
This process of obtaining nonassociative structures from associative structures can be extended
to obtain $L$-dendriform algebras \cite{BaiLiuNi10} and $J$-dendriform algebras \cite{BaiHou12} from
quadri-algebras.

An important open problem is to extend the Gr\"obner-Shirshov basis
for free dendriform algebras \cite{ChenWang10} to quadri-algebras and beyond.
This would allow us to extend the methods of the present paper to polynomial identities for
$J$-dendriform algebras.
It would also be very useful to have a theory of Gr\"obner bases for arbitrary ideals in the free
dendriform algebra.
In the dual setting of free associative dialgebras, this has been accomplished by Bokut et al.
\cite{BokutChenLiu10}.
This development is a necessary first step for the construction of universal enveloping dialgebras
(resp. dendriform algebras) for Leibniz algebras and Jordan dialgebras (resp. pre-Lie and pre-Jordan algebras).

Dual to quadri-algebras is the variety introduced by Vallette \cite[\S 5.6]{Vallette08}.
This variety can also be obtained by applying the KP algorithm
\cite{BremnerFelipeSanchezOrtega12,KolesnikovVoronin13}
to associative dialgebras.
It is an open problem to study analogues of Lie and Jordan structures in this setting.
This also raises the question of finding an analogue of the KP algorithm in the dendriform setting:
this algorithm takes a variety of algebras and produces the corresponding variety of dialgebras;
the analogous procedure in the dendriform setting would involve splitting of the operations
to produce the corresponding variety of dendriform algebras.
The BSO algorithm \cite{BremnerFelipeSanchezOrtega12,KolesnikovVoronin13} takes a multilinear $n$-ary
operation in an algebra and produces the corresponding operations in a dialgebra;
it would be useful to have an analogue of this process for dendriform algebras.

It would be natural to consider analogues of other nonassociative structures in the
settings of dendriform algebras, quadri-algebras, and the algebras defined by the dual operads.
For alternative structures, see \cite{Liu05,NiBai09}.
The same questions can be asked in the context of the associative trialgebras and dendriform trialgebras
introduced by Loday and Ronco \cite{LodayRonco04}.

As a final note, we recall that the operads defining Jordan algebras, Jordan dialgebras, and pre-Jordan
algebras, are all binary and cubic.
An important problem is to extend to cubic operads the theory of Koszul duality \cite{GinzburgKapranov}.


\section*{Acknowledgements}

We thank Pavel Kolesnikov for suggesting that we use computational linear algebra
to study relations between various classes of algebras in the ``dendriform world'',
and in particular, polynomial identities for the pre-Jordan product.
We also thank Jaydeep Chipalkatti for a helpful remark about representations of
symmetric groups.
Murray Bremner was supported by a Discovery Grant from NSERC, the Natural Sciences
and Engineering Research Council of Canada.
Sara Madariaga was supported by a Postdoctoral Fellowship from PIMS (Pacific Institute
for the Mathematical Sciences).


\end{document}